\theoremstyle{plain}
\newtheorem{thm}{Theorem}[section]
\newtheorem{lem}{Lemma}[section]
\newtheorem{prop}[lem]{Proposition}
\newtheorem{conj}[thm]{Conjecture}
\newtheorem{theo}{Theorem}
\theoremstyle{definition}
\newtheorem{defn}[lem]{Definition}
\newtheorem{rem}[lem]{Remark}
\newtheorem{ex}[lem]{Example}
\let\ssection=\section
\renewcommand{\section}{\setcounter{equation}{0}\ssection}
\newcommand{\R}{\mathbb{R}}
\newcommand{\Z}{\mathbb{Z}}
\newcommand{\C}{\mathbb{C}}
\newcommand{\N}{\mathbb{N}}
\newcommand{\F}{\mathcal{F}}
\newcommand{\Pc}{\mathcal{P}} 
\newcommand{\Rc}{\mathcal{R}}
\newcommand{\Sc}{\mathcal{S}}
\newcommand{\Id}{\mathrm{Id}}
\newcommand{\SL}{\mathrm{SL}}
\newcommand{\PSL}{\mathrm{PSL}}
\begin{document}

\title{On radius of convergence of $q$-deformed real numbers}

\author[L. Leclere]{Ludivine Leclere}
\author[S. Morier-Genoud]{Sophie Morier-Genoud}
\author[V. Ovsienko]{Valentin Ovsienko}
\author[A. Veselov]{Alexander Veselov}

\address{Ludivine Leclere, Sophie Morier-Genoud,
Laboratoire de Math\'ematiques,
Universit\'e de Reims,
U.F.R. Sciences Exactes et Naturelles,
Moulin de la Housse - BP 1039,
51687 Reims cedex 2,
France
}

\address{
Valentin Ovsienko,
Centre National de la Recherche Scientifique,
Laboratoire de Math\'ematiques,
Universit\'e de Reims,
U.F.R. Sciences Exactes et Naturelles,
Moulin de la Housse - BP 1039,
51687 Reims cedex 2,
France}

\address{
Alexander P. Veselov,
Department of Mathematical Sciences, 
Loughborough University, 
Loughborough, LE11 3TU, UK}

\email{ludivine.lecler@univ-reims.fr,
sophie.morier-genoud@univ-reims.fr,
valentin.ovsienko@univ-reims.fr,
A.P.Veselov@lboro.ac.uk}

\begin{abstract}
We study analytic properties of ``$q$-deformed real numbers'',
a notion recently introduced by two of us.
A $q$-deformed positive real number is a power series with integer coefficients in one formal variable~$q$.
We study the radius of convergence of these power series assuming that $q$ is a complex variable.
Our main conjecture, which can be viewed as a $q$-analogue of Hurwitz's Irrational Number Theorem,  claims that the $q$-deformed golden ratio has the smallest radius of convergence among all real numbers.
%provides a lower bound for these radii, given by the radius of convergence of the $q$-deformed golden ratio.
The conjecture is proved for certain class of rational numbers and confirmed by a number of computer experiments.
We also prove the explicit lower bounds for the radius of convergence for the $q$-deformed convergents of golden and silver ratios.
%Existence of a smaller radius of convergence is proved for arbitrary $q$-deformed rational numbers.
\end{abstract}

\maketitle

\thispagestyle{empty}

%%%%%%%%%%%%%%%%%%%%
%%%%%%%%%%%%%%%%%%%%
\section{Introduction}\label{IntroS}
%%%%%%%%%%%%%%%%%%%%
%%%%%%%%%%%%%%%%%%%%

There is a famous result due to Hurwitz \cite{Hurwitz}, which roughly claims that the golden ratio $\varphi=\frac{1+\sqrt{5}}{2}$ is the most irrational number.
More precisely, for any real number $x \in \R$ one can define as the measure of irrationality the Markov constant $\mu(x)$, which is the infimum of $c$, for which the inequality $|x-\frac{p}{q}|<\frac{c}{q^2}$ holds for infinitely many integer $p,q.$ Hurwitz's Irrational Number Theorem claims that for every $x \in \R$
$$
\mu(x)\leq \mu(\varphi)=\frac{1}{\sqrt{5}}
$$
with equality holding only for $x$ which are $\PSL(2,\Z)$-equivalent to the golden ratio.

In this paper we discuss a possible $q$-analogue of this classical result. Namely, we consider the $q$-deformations (or ``$q$-analogues'') of real numbers, which
have been recently introduced in~\cite{SVRat,SVRe} and studied further in \cite{CSS,SVJ,LMG}. 
They have several nice properties and connections, including theory of Conway-Coxeter friezes and knot invariants.  

For a rational $x=\frac{r}{s}>0$ 
the $q$-deformation is a rational function
\begin{equation}
\label{RS1}
\left[\frac{r}{s}\right]_{q}=\frac{\Rc(q)}{\Sc(q)},
\end{equation}
where~$\Rc(q)$ and~$\Sc(q)$ are coprime polynomials with positive integer coefficients
both depending on~$r$ and~$s$.
When $x\geq1$ is an irrational, the $q$-deformation of~$x$ is defined as a power series in~$q$:
\begin{equation}
\label{TayEq}
\left[x\right]_q=1+\varkappa_1q+\varkappa_2q^2+\varkappa_3q^3+\cdots
\end{equation}
with coefficients $\varkappa_{k}\in\Z$.
To obtain the series~\eqref{TayEq}, 
one chooses an arbitrary sequence of rationals~$(x_i)_{i\in\Z}$ converging to~$x$.
It turns out that  the Taylor series of the rational functions~$\left[x_i\right]_q$
stabilize, as $i$ grows; see~\cite{SVRe}.
Moreover, the stabilized series depends only on~$x$ (and not on the approximating sequence of rationals).
The series~$\left[x\right]_q$ is defined as the stabilization of
the Taylor series of~$\left[x_i\right]_q$.
It is unknown, in general, how to characterize the class of power series that
represent $q$-deformed real numbers.
One of the goals of this paper is to demonstrate that series arising in this context are 
not arbitrary.

We investigate analytic properties of $q$-deformed real numbers.
Considering the parameter of deformation~$q$ as a {\it complex variable}, 
$q\in\C$, we study the radius of convergence, $R(x)$,
of the series~\eqref{TayEq}. 
%We are convinced that the series arising as $q$-deformed reals must have non-zero radius of convergence.
%However, this is proved only in the rational case.
%We will prove that the radius of the golden ratio is
 In particular, we will show that the radius of the golden ratio $\varphi=\frac{1+\sqrt{5}}{2}$ is
$$
R_\star:=R(\varphi)=\frac{3-\sqrt{5}}{2}=0.381966...
$$
We have the following conjecture, which can be considered as a possible $q$-analogue of the Hurwitz claim.

\begin{conj}
\label{MainConj}
 For every real $x>0$ the radius of convergence $R(x)$ of the series~$\left[x\right]_q$
satisfies the inequality
$
R(x)\geq R_\star
$
and the equality holding only for $x$ which are $\PSL(2,\Z)$-equivalent to~$\varphi$.
\end{conj}

 Conjecture~\ref{MainConj} was checked by the extensive computer experiments.
 In this paper we get the following general results towards the proof of the conjecture.

\begin{thm}
\label{NewThm}
For every rational number $x>0$ the radius of convergence $R(x)$ of the series~$\left[x\right]_q$
has the following lower bound
$$
R(x) > 3-2\sqrt{2}=0.171572...
$$
\end{thm}

\begin{thm}
\label{NewThm2}
If all coefficients of the Hirzebruch-Jung 
continued fraction expansion of a rational number $x=\llbracket{}c_1,c_2,c_3,\ldots,c_N\rrbracket{}$
are larger than 3,
then the radius of convergence of~$\left[x\right]_q$ is greater than ~$R_\star.$
\end{thm}

Our main analytical tool is the classical Rouch\'e theorem (see~\cite{Conway,Tit}). Note that in the rational case the radius of convergence of the Taylor series can also be understood
as the radius of the maximal disc around~$0$ that contains no zeros of the polynomial~$S(q)$ in
the denominator of~\eqref{RS1}. 
 We refer also to the recent work ~\cite{Ren}, where our technique was used to prove the conjecture for a very particular
set of numbers (called metallic).

Conjecture~\ref{MainConj} and Theorem~\ref{NewThm} give a restriction for the series~\eqref{TayEq}
that can appear as a $q$-deformed real number.
However, it is quite clear that the radius of convergence is not the only condition.
It would be interesting to obtain more information about this class of power series.

Note that the inverse $\rho=R^{-1}$ of the radius of convergence of power series (\ref{TayEq}) can be given by the standard formula (see e.g. \cite{Conway})
$
\rho(x)=\limsup_{n\to\infty} |\varkappa_n|^{\frac{1}{n}}
$
and thus describes the growth of the coefficients~$\varkappa_n.$
Conjecture~\ref{MainConj} can be reformulated in the form, more similar to Hurwitz, as the inequality
$$
\rho(x)\leq \rho(\varphi)=\frac{3+\sqrt{5}}{2}=\varphi^2
$$
holding for every real $x$, and thus means that~$\left[\varphi\right]_q$ 
has the fastest growth of the coefficients among all $q$-deformed real numbers.
In other words, the series~$\left[x\right]_q$ always ``converges better'' than~$\left[\varphi\right]_q$,
whenever~$x$ is not equivalent to~$\varphi$.
In the particular case where~$x=\frac{r}{s}$ is rational, Conjecture~\ref{MainConj} means that
the polynomial~$\Sc(q)$ in~\eqref{RS1} has no roots $q$ with $|q|<R_\star.$

We should mention that the analogy with the Diophantine analysis here cannot be extended to the celebrated Markov theorem \cite{Markov}, claiming that the set of all possible Markov constants $\mu=\mu(x)>\frac{1}{3}$ is discrete (see e.g. \cite{Aigner}). 
The situation might be more similar to the Lyapunov spectrum of Markov and Euclid trees, 
which fills the whole segment $[0, \ln \varphi]$, see \cite{SV}.
The paper is organized as follows.
In Section~\ref{ExpS}, we briefly recall the notion of $q$-rationals and $q$-irrationals.
Following~\cite{LMG}, we emphasize the role of the modular group~$\PSL(2,\Z)$.
We also give explicit formulas in terms of continued fractions.

In Section~\ref{GRS}, we consider the important example of $q$-deformed golden ratio.
Its approximation by the quotients of consecutive Fibonacci numbers leads to interesting ``Fibonacci polynomials''.
We prove that the roots of these polynomials belong to the annulus
$
R_0<\left|q\right|<R_0^{-1},
$
where $R_0=0.35320...$ is the real root of the cubic equation
$$
R^3+2R^2+2R-1=0.
$$
This provides the lower estimate $R>R_0$ for the radius of convergence of the corresponding $q$-convergents.

In Section~\ref{PellS}, we consider the $q$-version of the silver ratio ~$\left[\sqrt{2}\right]_q$
and the related ``Pell polynomials''.
We prove that the roots of these polynomials belong to the annulus
$
R_1<\left|q\right|<R_1^{-1}
$
where $R_1=0.43542...$ is the smallest positive root of the equation
$$
R^4-2R^3-2R+1=0.
$$

Finally, in Section~\ref{GeS} we prove our Theorem~\ref{NewThm} and Theorem~\ref{NewThm2}.

%%%%%%%%%%%%%%%%%%%%
%%%%%%%%%%%%%%%%%%%%
\section{Definition and explicit formulas for $q$-reals}\label{ExpS}
%%%%%%%%%%%%%%%%%%%%
%%%%%%%%%%%%%%%%%%%%

We start with recalling an axiomatic definition of $q$-deformed rational numbers.
For more equivalent definitions see~\cite{SVRat,SVJ}.

%%%%%%%%%%%%%%%%%%%%
\subsection{An axiomatic definition}\label{AxDefSec}
%%%%%%%%%%%%%%%%%%%%
Recall that every rational can be obtained from~$0$, by applying a sequence of the operations
$x\mapsto{}x+1$ and~$x\mapsto{}-\frac{1}{x}$.

The following two recurrences (see~\cite{LMG}) suffice for calculation of $q$-rationals.

\begin{defn}
\label{RecDefQq}
The $q$-deformation sends every rational number $x=\frac{r}{s}$ to a rational function in~$q$
$$
x\longmapsto\left[x\right]_q=\frac{\Rc(q)}{\Sc(q)},
$$
in such a way that the following two recurrent formulas are satisfied
\begin{equation}
\label{SLEq}
\left[x+1\right]_q=q\left[x\right]_q+1,
\qquad\qquad
\left[-\frac{1}{x}\right]_q=-\frac{1}{q\left[x\right]_{q}}.
\end{equation}
Recurrences~\eqref{SLEq} determine the $q$-deformation of a rational in a unique way, starting from the
``initial'' trivial deformation~$\left[0\right]_q=0$; see~\cite{SVJ}.
\end{defn}

\begin{ex}
\label{Q5}
The following examples are easy to obtain applying~\eqref{SLEq}.

(a) The $q$-deformation of integers corresponds to the standard formulas of Euler and Gauss.
For $n\in\N$, one has
$$
\left[n\right]_{q}=
1+q+q^2+\cdots+q^{n-1},
\qquad\qquad
\left[-n\right]_{q}=
-q^{-n}-q^{1-n}-q^{2-n}-\cdots-q^{-1}.
$$
Both cases can be written as~$\left[x\right]_{q}=\frac{1-q^x}{1-x}$, where~$x$ is integer.

(b)
Already the simplest example of~$\frac{1}{2}$ shows that the $q$-deformation is not the quotient
of the $q$-deformed integers appearing in numerator and denominator:
$$
\left[\frac{1}{2}\right]_{q}=
\frac{q}{1+q},
\qquad\qquad
\left[-\frac{1}{2}\right]_{q}=
-\frac{1}{q\left(1+q\right)}.
$$
The above expression~$\left[x\right]_{q}=\frac{1-q^x}{1-x}$ is no longer true.

(c)
The next examples
$$
\left[\frac{5}{2}\right]_{q}=
\frac{1+2q+q^{2}+q^{3}}{1+q},
\qquad\qquad
\left[\frac{5}{3}\right]_{q}=
\frac{1+q+2q^{2}+q^{3}}{1+q+q^{2}}
$$
illustrate the fact
that the numerator and denominator in~\eqref{RS1} depend simultaneously on~$r$ and~$s$.
Indeed, the ``quantized~$5$'' in the numerator depends on the denominator.
\end{ex}

\begin{rem}
\label{IrRem}
In the case of $q$-rationals, one also has the formula for the negation and inverse:
\begin{equation}
\label{qInvEq}
\left[-x\right]_q=-q^{-1}\left[x\right]_{q^{-1}},
\qquad\qquad
\left[\frac{1}{x}\right]_q=\frac{1}{\left[x\right]_{q^{-1}}}.
\end{equation}
\end{rem}

%%%%%%%%%%%%%%%%%%%%
\subsection{The action and a central extension of $\PSL(2,\Z)$}
%%%%%%%%%%%%%%%%%%%%
We briefly mention here a more conceptual way of understanding Definition~\ref{RecDefQq}.
This observation will not be used in the sequel, and will be part of a separate work.

Recurrences~\eqref{SLEq} can be reformulated as a statement that
``$q$-deformation commutes with $\PSL(2,\Z)$-action''.
Indeed, recurrences~\eqref{SLEq} define an
action of the modular group $\PSL(2,\Z)$ on $q$-deformed rationals.
This action is given by fractional-linear transformations and is generated by the matrices
\begin{equation}
\label{TqSq}
T_q=\begin{pmatrix}
q&1\\[4pt]
0&1
\end{pmatrix},
\quad
\quad
S_q=\begin{pmatrix}
0&-1\\[4pt]
q&0
\end{pmatrix}
\end{equation}
which are $q$-deformations of the standard generators,~$T,S$, of~$\PSL(2,\Z)$
corresponding to~$q=1$ in~\eqref{TqSq}.

The relations $S^2=\Id$ and $(TS)^3=\Id$, defining~$\PSL(2,\Z)$, become
$$
S_q^2=q\,\Id,
\qquad\qquad
\left(T_qS_q\right)^3=q^3\Id,
$$
The matrices~$T_q,S_q$ generate an interesting non-trivial {\it central extension} of~$\PSL(2,\Z)$,
which is different from the braid group~$B_3$, yet contains it as a subgroup.
The centre, $\{q^n\Id\;|\;n\in\Z\}$, of the extended modular group 
acts trivially of $q$-deformed rationals,
so that one still has the well-defined action of~$\PSL(2,\Z)$.

\begin{rem}
(a)
Let us stress on the fact that emergence of a central extension of the symmetry group
is a usual situation in geometry; see~\cite{Kir}.
However, the centre of the extended group usually acts trivially on quantized objects.

(b)
The matrix~$S_q$ arose in the context of quantum groups; see~\cite{BK},
while $T_q$ can be viewed as a ``standard'' matrix connected to quantum integers.
However, we did not find in the literature simultaneous appearance of~$T_q$ and~$S_q$.
\end{rem}

%%%%%%%%%%%%%%%%%%%%
\subsection{$q$-deformed continued fractions}{\label{qCFSec}
%%%%%%%%%%%%%%%%%%%%

Every rational number~$\frac{r}{s}>0$ , where $r,s\in\Z_{>0}$ are coprime,
has a standard finite continued fraction expansion
$$
\frac{r}{s}
\quad=\quad
a_1 + \cfrac{1}{a_2 
          + \cfrac{1}{\ddots +\cfrac{1}{a_{2m}} } },
          $$
where $a_i\geq1$ (except for $a_1\geq0$).
It is usually denoted by  $\frac{r}{s}=[a_1,\ldots,a_{2m}]$.
Note that, choosing an even number of coefficients, one
removes the ambiguity $[a_1,\ldots,a_{n},1]=[a_1,\ldots,a_{n}+1]$ and makes the expansion unique.

There is also a unique continued fraction expansion 
with minus signs, often called the Hirzebruch-Jung continued fraction:
$$
\frac{r}{s}
 \quad =\quad
c_1 - \cfrac{1}{c_2 
          - \cfrac{1}{\ddots - \cfrac{1}{c_N} } } ,
$$
where $c_j\geq2$ (except for $c_1\geq1$).
The notation used by Hirzebruch is $\frac{r}{s}=\llbracket{}c_1,\ldots,c_N\rrbracket{}$.
The coefficients $a_i$ and $c_j$ of the above expansions
are connected by the Hirzebruch
formula; see, e.g.,~\cite{Hir,MGO} and Section~\ref{StatGTSec}.

\begin{defn}
\label{DefRat}
(a)
The $q$-deformed regular continued fraction is defined by
\begin{equation}
\label{qa}
[a_{1}, \ldots, a_{2m}]_{q}:=
[a_1]_{q} + \cfrac{q^{a_{1}}}{[a_2]_{q^{-1}} 
          + \cfrac{q^{-a_{2}}}{[a_{3}]_{q} 
          +\cfrac{q^{a_{3}}}{[a_{4}]_{q^{-1}}
          + \cfrac{q^{-a_{4}}}{
        \cfrac{\ddots}{[a_{2m-1}]_q+\cfrac{q^{a_{2m-1}}}{[a_{2m}]_{q^{-1}}}}}
          } }} 
\end{equation}
where~$[a]_q$ is the Euler $q$-integer.

(b)
The $q$-deformed Hirzebruch-Jung continued fraction is
\begin{equation}
\label{qc}
\llbracket{}c_1,\ldots,c_N\rrbracket_{q}:=
[c_1]_{q} - \cfrac{q^{c_{1}-1}}{[c_2]_{q} 
          - \cfrac{q^{c_{2}-1}}{\ddots \cfrac{\ddots}{[c_{N-1}]_{q}- \cfrac{q^{c_{N-1}-1}}{[c_N]_{q}} } }} 
\end{equation}
\end{defn}

For every rational $\frac{r}{s}$ written in two different ways: 
$$
\frac{r}{s}=[a_1,\ldots,a_{2m}]=\llbracket{}c_1,\ldots,c_N\rrbracket{},
$$
the rational functions~\eqref{qa} and~\eqref{qc} coincide,
and also coincide with the rational function~$\left[\frac{r}{s}\right]_q$
provided by Definition~\ref{RecDefQq}; see~\cite{SVRat}.

%%%%%%%%%%%%%%%%%%%%
\subsection{Stabilization phenomenon, $q$-irrationals}\label{IrrSec}
%%%%%%%%%%%%%%%%%%%%

The notion of $q$-deformed rational
was extended to irrational numbers in~\cite{SVRe}.
Let $x\geq1$ be an irrational number, and choose a sequence of rationals~$(x_n)_{n\geq1}$, converging to~$x$.
Consider the corresponding sequence of $q$-rationals
$\left[x_1\right]_q,\left[x_2\right]_q,\ldots$
It turns out that this sequence of rational functions also converge, but in the sense of formal power series.

Consider the Taylor expansions at~$q=0$ of the rational functions $\left[x_n\right]_q$, that,
abusing notation, we also denote by~$\left[x_n\right]_q$:
$$
\left[x_n\right]_q=\sum_{k\geq0}\varkappa_{n,k}\,q^k.
$$
One has the following stabilization property.

\begin{thm}[\cite{SVRe}]
\label{ConvThm}
(i) For every~$k\geq0$, the coefficients~$\varkappa_{n,k}$ of the Taylor series 
of the functions~$\left[x_n\right]_q$ stabilize, as~$n$ grows.

(ii) The limit coefficients,~$\varkappa_k:=\lim_{n\to\infty}\varkappa_{n,k}$,
do not depend on the choice of the sequence of rationals, but only on the irrational number~$x$.
\end{thm}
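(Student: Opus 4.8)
The plan is to work throughout with the Hirzebruch--Jung $q$-continued fraction \eqref{qc}, since for it the numerator and denominator of a $q$-rational are entries of a product of matrices with \emph{polynomial} entries. Put
\[
M_q(c)=\begin{pmatrix} {[c]_q} & -q^{\,c-1}\\[4pt] 1 & 0\end{pmatrix},\qquad \det M_q(c)=q^{\,c-1},
\]
so that, by \eqref{qc} and induction on $k$, $\left[y\right]_q=\Rc_k/\Sc_k$ whenever $y=\llbracket{}c_1,\ldots,c_k\rrbracket$, where $\binom{\Rc_k}{\Sc_k}$ is the first column of $P_k:=M_q(c_1)\cdots M_q(c_k)$ (with $\binom{\Rc_0}{\Sc_0}=\binom{1}{0}$). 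Since $x\geq1$ is irrational, in fact $x>1$, so the infinite Hirzebruch--Jung expansion $x=\llbracket{}c_1,c_2,c_3,\ldots\rrbracket$ has all $c_j\geq2$, and the truncations $y_m:=\llbracket{}c_1,\ldots,c_m\rrbracket$ form a sequence of rationals converging to $x$. I would use this particular sequence to define the candidate limit series.

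Two observations carry the argument. First, the second column of $M_q(c_m)$ is $-q^{\,c_m-1}\binom{1}{0}$, so the second column of $P_m$ equals $-q^{\,c_m-1}\binom{\Rc_{m-1}}{\Sc_{m-1}}$; computing $\det P_m=\prod_{j\leq m}q^{\,c_j-1}$ both directly and from this block form gives the key identity
\[
\Rc_m\Sc_{m-1}-\Rc_{m-1}\Sc_m=-q^{\,d_{m-1}},\qquad d_{m-1}:=\sum_{j=1}^{m-1}(c_j-1)\geq m-1 .
\]
Second, reducing modulo $q$ (where $[c]_q\equiv1$ and $q^{\,c-1}\equiv0$ because $c\geq2$) shows $\Rc_m(0)=\Sc_m(0)=1$ for every $m$; hence each $\left[y_m\right]_q$ is an honest power series with constant term $1$, so that ``$\equiv$ mod $q^N$'' is meaningful. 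Dividing the key identity by $\Sc_m\Sc_{m-1}$ gives $\left[y_m\right]_q-\left[y_{m-1}\right]_q=-q^{\,d_{m-1}}\bigl(1+O(q)\bigr)$, and $d_{m-1}\to\infty$, so the Taylor coefficients of $\left[y_m\right]_q$ stabilize; set $\left[x\right]_q:=\lim_m\left[y_m\right]_q$, and note the same estimate yields $\left[x\right]_q\equiv\left[y_m\right]_q\pmod{q^{\,d_m}}$.

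It remains to see that an arbitrary sequence of rationals $x_n\to x$ produces the same coefficient-wise limit. For fixed $m$, the reals whose first $m$ Hirzebruch--Jung coefficients are $c_1,\ldots,c_m$ form an interval with $x$ in its interior, so for all large $n$ one has $x_n=\llbracket{}c_1,\ldots,c_m,b_{m+1},\ldots,b_\ell\rrbracket$ with all $b_j\geq2$ (in particular $x_n>1$). Writing the matrix product as $P_m\,Q$ with $Q=M_q(b_{m+1})\cdots M_q(b_\ell)$, whose first column reduces to $\binom{1}{1}$ modulo $q$, and combining with the second-column formula for $P_m$ and the key identity, a short computation gives $\left[x_n\right]_q-\left[y_m\right]_q=O(q^{\,d_m})$, hence $\left[x_n\right]_q\equiv\left[x\right]_q\pmod{q^{\,d_m}}$. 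Since $d_m\to\infty$, for each fixed $k$ we obtain $\varkappa_{n,k}=\varkappa_k$ once $n$ is large, where $\varkappa_k$ is the $k$-th coefficient of the series $\left[x\right]_q$ built above; this gives (i) and the sequence-independence in (ii).

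The content is mostly bookkeeping; the step I expect to need the most care is ensuring that all the rational functions in play are genuine power series, i.e. that $\Sc_m(q)$ and the denominator of $\left[x_n\right]_q$ have non-vanishing constant term --- which is precisely where the hypothesis $x\geq1$ enters, forcing all relevant Hirzebruch--Jung entries to be $\geq2$ --- together with the precise form, for the Hirzebruch--Jung expansion and for \emph{rational} approximants, of the ``continuity of the continued fraction'' statement and of the link (through $d_m$) between the length of the common initial segment and the order in $q$ to which the two series agree.
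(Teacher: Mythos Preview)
The present paper does not contain a proof of this theorem: it is quoted from~\cite{SVRe} and only stated here. So there is no ``paper's own proof'' to compare against. That said, your argument is correct and is essentially the approach used in~\cite{SVRe}: the matrix form of the Hirzebruch--Jung $q$-continued fraction, the determinantal identity $\Rc_m\Sc_{m-1}-\Rc_{m-1}\Sc_m=-q^{\,d_{m-1}}$, and the observation that $\Sc_m(0)=1$ (so Taylor expansions at $q=0$ exist) together give stabilization along the canonical convergents, and the ``common initial segment'' argument transfers this to an arbitrary approximating sequence.

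A couple of small points worth tightening. First, in the last step you assert that the first column of $Q$ reduces to $\binom{1}{1}$ modulo $q$; this is true when $\ell>m$, but if $x_n=y_m$ then $Q=\Id$ and the first column is $\binom{1}{0}$. This is harmless since then $\left[x_n\right]_q=\left[y_m\right]_q$ exactly, but the case split should be mentioned. Second, your computation actually shows
\[
\left[x_n\right]_q-\left[y_m\right]_q=\frac{-B\,q^{\,d_m}}{\Sc'\,\Sc_m},
\]
with $B(0)=\Sc'(0)=\Sc_m(0)=1$, which you should display rather than leave implicit. Finally, the ``interval with $x$ in its interior'' claim for Hirzebruch--Jung expansions is indeed the one place that needs a short justification; it follows from the fact that the set of tails $\llbracket c_{m+1},c_{m+2},\ldots\rrbracket$ with all entries $\geq2$ is exactly the interval $(1,\infty)$, and the M\"obius map $t\mapsto\llbracket c_1,\ldots,c_m,t\rrbracket$ is a homeomorphism onto its image, so an irrational $x$ (whose tail lies strictly inside $(1,\infty)$) is interior.
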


The $q$-deformation, $\left[x\right]_q$, is the limit power series in~$q$,
which has the form~\eqref{TayEq} (i.e., the coefficient of degree~$0$ is equal to~$1$).
Furthermore, the recurrence~\eqref{SLEq} also holds for the series~$\left[x\right]_q$,
and allows one to extend the $q$-deformation to the case of~$x<1$.
In particular, if~$x$ is negative, the resulting series in~$q$ is a {\it Laurent series} (with integer coefficients):
$$
\left[x\right]_q=-q^{-N}+\varkappa_{1-N}\,q^{1-N}+\varkappa_{2-N}\,q^{2-N}+\cdots
$$
where $N\in\Z_{>0}$ such that $-N\leq{}x<1-N$.

\begin{rem}
Let us mention that the stabilization phenomenon fails when a sequence of rationals~$(x_n)_{n\geq1}$, converges to
another rational, cf.~\cite{SVRe,Asi}.
\end{rem}

%%%%%%%%%%%%%%%%%%%%
%%%%%%%%%%%%%%%%%%%%
\section{Convergence radius of $q$-golden ratio and roots of the Fibonacci polynomials}\label{GRS}
%%%%%%%%%%%%%%%%%%%%
%%%%%%%%%%%%%%%%%%%%

The simplest example of $q$-irrational is the $q$-deformation of the celebrated golden ratio,
$\varphi=\frac{1+\sqrt{5}}{2}$.
The series~$\left[\varphi\right]_q$ is obtained as stabilized Taylor series of the $q$-deformed quotients of the
consecutive Fibonacci numbers $\left[\frac{F_{n+1}}{F_{n}}\right]_q$ (see~\cite{SVRat,SVRe}).
We call the polynomials in the numerator and the denominator of these rational functions 
the {\it Fibonacci polynomials}.

We prove that the radius of convergence of~$\left[\varphi\right]_q$ is~$R_\star=\frac{3-\sqrt{5}}{2}$, 
and  that all roots of the Fibonacci polynomials
belong to the annulus bounded by the circles 
with the radius~$R_\star=\frac{3-\sqrt{5}}{2}$ and~$R_\star^{-1}=\frac{3+\sqrt{5}}{2}$.
This result is in an accordance with Conjecture~\ref{MainConj}.

%%%%%%%%%%%%%%%%%%%%
\subsection{The $q$-deformed golden ratio}
%%%%%%%%%%%%%%%%%%%%

The $q$-deformation of the golden ratio,
$\varphi=\frac{1+\sqrt{5}}{2}$,
was considered in~\cite{SVRe}.
The series~$\left[\varphi\right]_q$ can be written
as an infinite continued fraction:
\begin{equation}
\label{GCF}
\left[\varphi\right]_q=
1 + \cfrac{q^{2}}{q
          + \cfrac{1}{1 
          +\cfrac{q^{2}}{q
          + \cfrac{1}{\ddots
       }}}} 
             \; =\;
1 + \cfrac{1}{q^{-1}
          + \cfrac{1}{q^2
          +\cfrac{1}{q^{-3}
          + \cfrac{1}{\ddots
       }}}} 
\end{equation}

The series starts as follows
$$
\begin{array}{rcl}
\left[\varphi\right]_q&=&
1 + q^2 - q^3 + 2 q^4 - 4 q^5 + 8 q^6 - 17 q^7 + 37 q^8 - 82 q^9 + 185 q^{10} \\[6pt]
&&- 423 q^{11} + 978 q^{12}-2283q^{13}+ 5373q^{14}-12735q^{15}+30372q^{16}\\[6pt]
&&-72832q^{17}+175502q^{18}-424748q^{19}+1032004q^{20} \cdots
\end{array}
$$
The sequence of coefficients in~$\left[\varphi\right]_q$ coincides (up to the alternating sign)
with the remarkable sequence A004148 of \cite{OEIS} called the ``generalized Catalan numbers''.

The series~$\left[\varphi\right]_q$ is a solution of the functional equation
\begin{equation}
\label{GREq}
q\,X^2=
\left(q^2+q-1 \right)X +1,
\end{equation}
which can be deduced from~\eqref{GCF}.

\begin{prop}
\label{GRProp}
The radius of convergence of the series~$\left[\varphi\right]_q$ is equal to~$R_\star$.
\end{prop}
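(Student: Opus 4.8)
The plan is to solve the quadratic functional equation \eqref{GREq} explicitly and locate the singularity of the resulting algebraic function nearest the origin. Solving $qX^2=(q^2+q-1)X+1$ for $X$ gives
\begin{equation}
\left[\varphi\right]_q=\frac{(q^2+q-1)+\sqrt{(q^2+q-1)^2+4q}}{2q},
\end{equation}
where the branch of the square root is chosen so that the Taylor expansion at $q=0$ begins with $1$ (one checks the numerator is $\sim 2q$ near $0$, so the apparent pole at $q=0$ is removable and the constant term is indeed $1$). A power series obtained from a square root of a polynomial has radius of convergence equal to the modulus of the nearest root of the radicand (the discriminant), unless that root is cancelled by the branch choice; so the first step is to factor the discriminant $\Delta(q):=(q^2+q-1)^2+4q=q^4+2q^3-q^2+2q+1$.

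Next I would factor $\Delta(q)$. Since $\left[\varphi\right]_q$ satisfies $\left[\varphi\right]_q=1+\dots$ and is related under $q\mapsto q^{-1}$ to a reciprocal-type symmetry coming from \eqref{qInvEq}, it is natural to expect $\Delta$ to be palindromic, which it is: $\Delta(q)=q^4+2q^3-q^2+2q+1$. Dividing by $q^2$ and setting $w=q+q^{-1}$ turns $\Delta(q)/q^2$ into $w^2+2w-3=(w+3)(w-1)$. Hence $\Delta(q)=q^2(q+q^{-1}+3)(q+q^{-1}-1)$, i.e.
\begin{equation}
\Delta(q)=(q^2+3q+1)(q^2-q+1).
\end{equation}
The roots of $q^2-q+1$ lie on the unit circle (they are primitive sixth roots of unity), while $q^2+3q+1=0$ gives $q=\frac{-3\pm\sqrt{5}}{2}$; the root of smallest modulus overall is $q=\frac{-3+\sqrt{5}}{2}$, whose modulus is exactly $\frac{3-\sqrt{5}}{2}=R_*$.

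It remains to check that this nearest singularity is genuine, i.e. not cancelled by the branch of the square root: near $q_0=\frac{-3+\sqrt{5}}{2}$ one has $\Delta(q)\to 0$ simply (a simple zero, since $q^2-q+1$ does not vanish there), so $\sqrt{\Delta(q)}$ has a genuine square-root branch point at $q_0$, and $2q$ in the denominator is nonzero there; therefore $\left[\varphi\right]_q$ has an algebraic singularity at $q_0$ and is holomorphic on the open disc $|q|<R_*$ (the other discriminant roots all have modulus $\ge 1>R_*$, and $q=0$ is removable). This gives $R(\varphi)=R_*$. The main obstacle is really just the bookkeeping around $q=0$ and the branch choice — one must confirm that the sign of the square root forced by the normalization $\left[\varphi\right]_q=1+q^2+\cdots$ is consistent and does not accidentally produce cancellation of the $q_0$ branch point; everything else is the elementary factorization above. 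Alternatively, and perhaps more cleanly, one can avoid branch-of-square-root subtleties by invoking \eqref{Rx}: the coefficients are (up to sign) the generalized Catalan numbers A004148, whose known asymptotics $\varkappa_n\sim c\,\rho^n n^{-3/2}$ with $\rho=\frac{3+\sqrt5}{2}$ give $\limsup|\varkappa_n|^{1/n}=\rho=R_*^{-1}$ directly; but I would present the functional-equation argument as primary since it is self-contained and sets up the Rouch\'e-type estimates used later for the Fibonacci polynomials.
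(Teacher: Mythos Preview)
Your proposal is correct and follows essentially the same route as the paper: solve the quadratic~\eqref{GREq}, write the generating function with a square root of the discriminant, factor the discriminant as $(q^2+3q+1)(q^2-q+1)$, and read off $R_*$ as the modulus of the nearest root. You add detail the paper omits---the palindrome substitution $w=q+q^{-1}$ to obtain the factorization, the removability check at $q=0$, and the verification that the branch point at $q_0=\frac{-3+\sqrt5}{2}$ is genuine---all of which is welcome, since the paper simply asserts the factored form and concludes convergence for $|q|<R_*$ without explicitly arguing that the radius is no larger.
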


\begin{proof}
It follows from~\eqref{GREq}, that
the generating function of $\left[\varphi\right]_q$ can therefore be written in radicals:
\begin{equation}
\label{GGF}
\mathrm{GF}_{\left[\varphi\right]_q}=
\frac{q^2+q-1+\sqrt{(q^2 + 3q + 1)(q^2 - q + 1)}}{2q},
\end{equation}
and the series~$\left[\varphi\right]_q$ is the Taylor expansion of~$\mathrm{GF}_{\left[\varphi\right]_q}$ at $q=0$.
The number~$R_\star$ is the modulus of the smallest (i.e., closest to~$0$) root of the polynomials under the radical in~\eqref{GGF}.
Indeed, 
$$
q^2 + 3q + 1=
\left(q+R_\star\right)\left(q+R_\star^{-1}\right),
$$
and therefore the Taylor series of~\eqref{GGF} converges for~$|q|<R_\star$.
\end{proof}

\begin{rem}
Observe that formula~\eqref{GCF} has a certain similarity 
with the celebrated Rogers-Ramanujan continued fraction
$$
R(q)=
1 + \cfrac{1}{1
          + \cfrac{q}{1 
          +\cfrac{q^{2}}{1
          + \cfrac{q^3}{\ddots
       }}}} 
$$
but the deformation~\eqref{GCF} has very different properties.
\end{rem}

%%%%%%%%%%%%%%%%%%%%
\subsection{The Fibonacci polynomials}
%%%%%%%%%%%%%%%%%%%%

The most natural choice of a sequence of rationals converging to~$\varphi$ is related to a
quite remarkable and well-known sequence of polynomials.

Let~$F_n$ be the~$n^{\hbox{th}}$ Fibonacci number,
the sequence of rationals $\frac{F_{n+1}}{F_{n}}$ converges to~$\varphi$.
Quantizing this sequence, one obtains a sequence of rational functions
\begin{equation}
\label{FibPolDef}
\left[\frac{F_{n+1}}{F_{n}}\right]_q=:
\frac{\tilde\F_{n+1}(q)}{\F_n(q)}.
\end{equation}
The polynomials~$\tilde\F_{n+1}(q)$ and~$\F_n(q)$
in the numerator and denominator of~\eqref{FibPolDef} are $q$-deformations of the Fibonacci numbers,
considered in~\cite{SVRat}.

Both sequences of polynomials~$\F_n(q)$ and~$\tilde\F_{n}(q)$ are of degree~$n-2$ (for~$n\geq2$)
and are mirror of each other:
$$
\tilde\F_n(q)=q^{n-2}\F_n(q^{-1}).
$$

The polynomials~$\F_n(q)$ can be calculated recursively.
It will be convenient to separate the sequence of polynomials~$\F_n(q)$ into two subsequences,
with even~$n$ and odd~$n$.
Both of these sequences satisfy the same recurrence,
which is a $q$-analogue of the classical recurrence
$$
F_{n+2}= 3\,F_n-F_{n-2}
$$ 
for the Fibonacci numbers.

\begin{prop}
\label{FiboP}
The polynomials~$\F_n(q)$ in the denominator of~\eqref{FibPolDef}
are determined by the recurrence
\begin{equation}
\label{FibPolRec}
\F_{n+2}(q)= [3]_q\,\F_n(q)-q^2\F_{n-2}(q),
\end{equation}
where $[3]_q=1+q+q^2$, and the initial conditions
$$
\left(\F_0(q)=1,\;
\F_2(q)=1+q\right)
\qquad\hbox{and}\qquad
\left(\F_1(q)=1,\;
\F_3(q)=1+q+q^2\right).
$$
\end{prop}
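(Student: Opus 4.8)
The plan is to derive the recurrence \eqref{FibPolRec} directly from the defining recurrences \eqref{SLEq} for $q$-rationals, exploiting the structure of the continued fraction expansion of $\frac{F_{n+1}}{F_n}$. First I would recall that the consecutive ratios of Fibonacci numbers have the regular continued fraction expansion $\frac{F_{n+1}}{F_n}=[1,1,\ldots,1]$ with $n$ entries equal to $1$, so that the $q$-deformation $\left[\frac{F_{n+1}}{F_n}\right]_q$ is computed by the nested formula \eqref{qa} with all $a_i=1$. Equivalently, one can pass between consecutive ratios via the modular-type step $x\mapsto 1+\frac{1}{x}$ (or its inverse), which by \eqref{SLEq} translates into the $q$-level step $\left[1+\tfrac1x\right]_q = q\left[\tfrac1x\right]_q+1 = 1 + \tfrac{q}{\left[x\right]_{q^{-1}}}$ after using Remark~\ref{IrRem}. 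Tracking how the numerator and denominator polynomials transform under this step yields a first-order (in the pair of polynomials) linear recurrence; the point of Proposition~\ref{FiboP} is that after iterating it twice one obtains a \emph{scalar} three-term recurrence for the denominators $\F_n$ alone.

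The cleanest route is a transfer-matrix computation. Writing $\left[\frac{F_{n+1}}{F_n}\right]_q = \frac{\tilde\F_{n+1}(q)}{\F_n(q)}$ as in \eqref{FibPolDef}, the step relating $\left[\frac{F_{n+2}}{F_{n+1}}\right]_q$ to $\left[\frac{F_{n+1}}{F_n}\right]_q$ can be encoded by multiplication of the column vector $\binom{\tilde\F_{n+1}}{\F_n}$ by an explicit $2\times 2$ matrix over $\Z[q,q^{-1}]$ built from the $q$-deformed generators $T_q, S_q$ of \eqref{TqSq}. Because the continued fraction is periodic with period one, going two steps forward multiplies by the square of a fixed matrix $M(q)$; the Cayley--Hamilton identity $M^2 = (\operatorname{tr} M)\,M - (\det M)\,I$ then forces $\F_{n+2} = (\operatorname{tr} M^{?})\,\F_n - (\det)\,\F_{n-2}$ with the appropriate powers of $q$ bookkept from the $q^{\pm a_i}$ factors in \eqref{qa}. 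Computing $\operatorname{tr}$ and $\det$ of the relevant matrix should produce exactly $[3]_q = 1+q+q^2$ and $q^2$ respectively, giving \eqref{FibPolRec}. I would double-check the power of $q$ in the $q^2\F_{n-2}$ term against the known degree count: $\deg\F_n = n-2$, so $\deg\F_{n+2} = n$, $\deg([3]_q\F_n) = n$, and $\deg(q^2\F_{n-2}) = n$, which is consistent.

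Finally, the initial conditions are verified by direct computation from Definition~\ref{RecDefQq}: $\left[\frac{F_2}{F_1}\right]_q = [1]_q = 1$ gives $\F_0$ (by the degree/indexing convention, taking $\F_0 = 0$ as the ``denominator'' in the base case, matching the trivial deformation $\left[0\right]_q=0$ analogue) and $\F_2 = 1$; then $\left[\frac{F_3}{F_2}\right]_q = \left[2\right]_q = 1+q$ gives $\F_3 = 1+q$ and pins down $\F_1 = 1$ as the denominator of $\left[\frac{F_2}{F_1}\right]_q=\left[1\right]_q$. One then checks that the even-indexed sequence $\F_0,\F_2,\F_4,\ldots$ and the odd-indexed sequence $\F_1,\F_3,\F_5,\ldots$ each satisfy \eqref{FibPolRec} from these four starting values, which is immediate once the matrix identity is in place.

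The main obstacle I anticipate is purely bookkeeping: the $q$-deformed continued fraction \eqref{qa} alternates between $[a_i]_q$ and $[a_i]_{q^{-1}}$ and carries factors $q^{\pm a_i}$, so the transfer matrix for a single step is not simply $T_q$ or $S_q$ but a $q$-dependent conjugate, and one must be careful that the mirror relation $\tilde\F_n(q) = q^{n-2}\F_n(q^{-1})$ is respected at every stage. Getting the normalization right so that the polynomials are genuine polynomials (not Laurent polynomials) with the stated degree, and so that the coefficient of $\F_{n-2}$ comes out as exactly $q^2$ rather than $q^{\pm 2}$ times a unit, is where the care is needed; everything else is a routine consequence of Cayley--Hamilton.
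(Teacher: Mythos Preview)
Your approach is correct and is essentially the paper's proof dressed in matrix language: the paper simply quotes from \cite{SVRat} the one-step recurrences $\F_{2\ell+1}=q\F_{2\ell}+\F_{2\ell-1}$ and $\F_{2\ell+2}=\F_{2\ell+1}+q^2\F_{2\ell}$, and the three-term relation \eqref{FibPolRec} follows by substituting one into the other. Your two-step transfer matrix has exactly these recurrences encoded in it, and the Cayley--Hamilton step is nothing other than this $2\times2$ elimination, producing trace $[3]_q$ and determinant $q^2$ as required.
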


\begin{proof}
It was shown in~\cite{SVRat} that
$$
\begin{array}{rcl}
\F_{2\ell+1} &=& q\F_{2\ell}+\F_{2\ell-1},\\[4pt]
\F_{2\ell+2} &=& \F_{2\ell+1}+q^2\F_{2\ell},
\end{array}
$$
Recurrence~\eqref{FibPolRec} follows readily.
\end{proof}

The coefficients of the polynomials~$\F_n(q)$ and~$\tilde\F_{n}(q)$, with~$n\geq1$, form the triangles 
$$
\begin{array}{rcccccccc}
1\\
1&1\\
1&1&1\\
1&2&1&1\\
1&2&2&2&1\\
1&3&3&3&2&1\\
1&3&4&5&4&3&1\\
\cdots
\end{array}
\qquad\qquad
\begin{array}{rcccccccc}
&&&&&&1\\
&&&&&1&1\\
&&&&1&1&1\\
&&&1&1&2&1\\
&&1&2&2&2&1\\
&1&2&3&3&3&1\\
1&3&4&5&4&3&1\\
&&&&&\cdots
\end{array}
$$
known as Sequences~A123245 and~A079487 of OEIS~\cite{OEIS}, respectively.
\begin{ex}
\label{FibpPEx}
One has
$$
\begin{array}{rcl}
\left[\frac{5}{3}\right]_q&=&\displaystyle\frac{1+q+2q^2+q^3}{1+q+q^2},
\\[12pt]
\left[\frac{8}{5}\right]_q&=&\displaystyle\frac{1+2q+2q^2+2q^3+q^4}{1+2q+q^2+q^3},
\\[12pt]
\left[\frac{13}{8}\right]_q&=&\displaystyle\frac{1+2q+3q^2+3q^3+3q^4+q^5}{1+2q+2q^2+2q^3+q^4},
\\[12pt]
\left[\frac{21}{13}\right]_q&=&\displaystyle\frac{1+3q+4q^2+5q^3+4q^4+3q^5+q^6}{1+3q+3q^2+3q^3+2q^4+q^5},
\\
\ldots&\ldots&\ldots
\end{array}
$$
\end{ex}

\begin{rem}
The polynomials~$\F_n(q)$
with odd~$n$ are specializations of $3$-parameter family of polynomials
considered in~\cite{Var} (see Remark 8.4.).
\end{rem}

%%%%%%%%%%%%%%%%%%%%
\subsection{Roots of the Fibonacci polynomials}
%%%%%%%%%%%%%%%%%%%%
Our next goal is to obtain the bounds for the absolute values of roots of the Fibonacci polynomials.
For this we will use the following classical Rouch\'e theorem (see~\cite{Conway,Tit}).

\begin{theo}[Rouch\'e]
Let $f$ and~$g$ be two functions of one complex variable~$q$, holomorphic inside a disc~$D$
and continuous on the bound~$\partial{}D$.
Suppose that $\left| f(q)\right|>\left| g(q)\right|$ on~$\partial{}D$, then $f$ and~$f+g$ have the same number of zeros inside~$D$.
\end{theo}

Let $R_0=0.35320...$ be the positive root of the equation
\begin{equation}
\label{R0}
R^3+2R^2+2R-1=0.
\end{equation}

Let~$D_0$ be the disc and $C_0$ the circle with radius~$R_0$:
$$
D_0=\left\{q\in\C\;,|q|\leq R_0\right\},
\qquad\qquad
C_0=\left\{q\in\C\;,|q|=R_0\right\}.
$$

\begin{thm}
\label{FiboT}
For every~$n\in\N$ the roots of the Fibonacci polynomials~$\F_n(q)$ and~$\tilde\F_{n}(q)$
belong to the annulus
$$
R_0<\left|q\right|<R_0^{-1}.
$$
\end{thm}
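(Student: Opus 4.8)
The plan is to use the recurrence \eqref{FibPolRec} together with Rouch\'e's theorem on the two circles $|q|=R_*$ and $|q|=R_*^{-1}$, exploiting the mirror symmetry $\tilde\F_n(q)=q^{n-2}\F_n(q^{-1})$ to reduce everything to a single statement about $\F_n$. Observe first that if $\F_n(q_r)=0$ with $q_r\neq 0$ then $\tilde\F_n(1/q_r)=0$; hence the two bounds in the theorem for all the $\F_n$ and $\tilde\F_n$ follow once we show that every root of $\F_n$ lies in the open annulus $R_*<|q|<R_*^{-1}$. So the core task is: for each $n\ge 2$, $\F_n(q)\neq 0$ whenever $|q|\le R_*$, and $\F_n(q)\neq 0$ whenever $|q|\ge R_*^{-1}$ — equivalently, using the mirror symmetry once more, $\F_n$ has no zeros in $|q|\le R_*$ \emph{and} $\tilde\F_n$ has no zeros in $|q|\le R_*$.

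For the statement ``$\F_n(q)\neq 0$ on the closed disc $D_0$'' I would argue by induction on $n$ along the two parities simultaneously, using \eqref{FibPolRec}. The natural splitting for Rouch\'e is to write $\F_{n+2}=f+g$ with $f=[3]_q\,\F_n$ playing the role of the dominant term and $g=-q^2\F_{n-2}$ the small perturbation; since $[3]_q=1+q+q^2$ has its roots at the primitive sixth roots of unity, which have modulus $1>R_*$, the factor $[3]_q$ is nonzero on $D_0$, so by the inductive hypothesis $f$ has no zeros in $D_0$. It then suffices to check the strict inequality $|q^2\F_{n-2}(q)|<|[3]_q\,\F_n(q)|$ on the circle $C_0$, i.e. $|q|^2\,|\F_{n-2}(q)|<|[3]_q|\,|\F_n(q)|$ for $|q|=R_*$. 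Rouch\'e then gives that $\F_{n+2}$ has the same number of zeros in $D_0$ as $f$, namely none, completing the induction. The same scheme applied to the mirrored recurrence (or directly to $\tilde\F_n$, which satisfies $\tilde\F_{n+2}=q^2[3]_{q^{-1}}\tilde\F_n-q^2\tilde\F_{n-2}$, i.e. $\tilde\F_{n+2}=(1+q+q^2)\tilde\F_n-q^2\tilde\F_{n-2}$ after clearing, since $q^2[3]_{q^{-1}}=[3]_q$) handles the outer bound.

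The main obstacle is verifying the pointwise inequality on $C_0$, because it couples three consecutive members of the sequence and an unconditional bound like $|\F_{n-2}(q)|\le|\F_n(q)|$ need not hold term-by-term. The clean way around this is to strengthen the induction hypothesis so that it carries the needed ratio estimate along. Concretely, I would introduce the auxiliary quantity $r_n(q)=\F_{n}(q)/\F_{n+2}(q)$ (a $q$-analogue of $F_n/F_{n+2}\to R_*$) and prove, by induction using \eqref{FibPolRec} rewritten as $1=[3]_q\,r_n - q^2 r_{n-2} r_n$, that on and inside $C_0$ one has both $\F_n\neq 0$ and a uniform bound $|r_n(q)|\le \rho$ for a suitable constant $\rho$ (one expects $\rho$ slightly larger than $R_*^2=R_*/R_*^{-1}$, chosen so that $|q|^2|r_{n-2}(q)|\le |q|^2\rho < |[3]_q|$ holds on $|q|=R_*$ given the minimum of $|1+q+q^2|$ there). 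The closure of the induction then reduces to a single elementary estimate: $\min_{|q|=R_*}|1+q+q^2| > R_*^2\,\rho$, together with checking the base cases $\F_0,\F_1,\F_2,\F_3$ (and their mirrors) directly — $\F_2=1$, $\F_3=1+q$ with root $-1$, etc., all manifestly nonvanishing on $D_0$. Finally I would note that strictness of the inequalities $R_*<|q_r|$ and $|q_r|<R_*^{-1}$ comes for free since Rouch\'e yields no zeros in the \emph{closed} discs $D_0$ and $\{|q|\le R_*\}$ for $\tilde\F_n$, hence none on the boundary circles either.
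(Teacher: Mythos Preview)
Your approach---Rouch\'e on $C_0$ applied to the recurrence~\eqref{FibPolRec}, with the induction strengthened to carry a uniform bound $|r_{n-2}|=|\F_{n-2}/\F_n|<\rho$ along---is exactly what the paper does (its two lemmas are precisely $\min_{C_0}|[3]_q|=2R_*$ and the propagation of the ratio bound), and your reduction of the outer bound to the inner one via the mirror symmetry is a clean alternative to the paper's direct repetition of the argument for~$\tilde\F_n$.

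One point needs correcting, though, and it is where the actual content lies. The constant~$\rho$ is not ``slightly larger than $R_*^2$''; it is forced to be exactly~$R_*^{-1}$. From $1/r_n=[3]_q-q^2 r_{n-2}$ and $\min_{C_0}|[3]_q|=2R_*$ (attained at $q=-R_*$, using $R_*^2=3R_*-1$) you get $|r_n|\le(2R_*-R_*^2\rho)^{-1}$, and demanding this be $\le\rho$ yields $(R_*\rho-1)^2\le 0$, i.e.\ $\rho=R_*^{-1}$ is the \emph{only} value that closes the loop. With that choice the Rouch\'e inequality is $R_*^2\cdot R_*^{-1}=R_*<2R_*$, and strict inequality propagates because one takes $|r_{n-2}|<R_*^{-1}$ strictly (base cases $r_0=0$ and $|r_1|=|1+q|^{-1}\le(1-R_*)^{-1}=\varphi<R_*^{-1}$). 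So the closure does not reduce to the single estimate $\min_{C_0}|[3]_q|>R_*^2\rho$ you name; there is a second constraint coming from the self-map on the ratio, and the two together pin down~$\rho$ uniquely. With $\rho\approx R_*^2$ the induction would fail immediately.
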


\begin{proof}
We need the following useful lemma.

\begin{lem}
\label{MinLem}
For every positive integer~$n$ one  has
\begin{equation}
\label{EstLem}
\left|{\left[n\right]_q}\right|\geq\frac{1-R^{n}}{{1+R}},%\leq\frac{1+R^{c}}{{1-R}},
\end{equation}
on a circle $C_R=\{q\in \mathbb C: |q|=R\}$ of any radius~$R$.
\end{lem}

\begin{proof}
Indeed, the modulus of the polynomial
$
\left[c\right]_q=
\frac{1-q^n}{1-q}
$
restricted to $C_R$ is of the form
$$
\left|\left[n\right]_q\right|_{C_R}^2=
\frac{(1-R^ne^{in\theta})(1-R^ne^{-in\theta})}{(1-Re^{i\theta})(1-Re^{-i\theta})}=
\frac{1-2R^n\cos(2n\theta)+R^{2n}}{1-2R\cos(\theta)+R^2},
$$
where $\theta$ is the standard parameter on the circle.
Observe that the numerator of this fraction is greater 
than the numerator of the square of the right-hand-side of~\eqref{EstLem},
while the denominator is smaller.
The lemma follows.
\end{proof}

Recurrence~\eqref{FibPolRec} can be rewritten as follows
\begin{equation}
\label{FibFracRec}
\frac{\F_{n+2}(q)}{\F_n(q)}= [3]_q-q^2\,\frac{\F_{n-2}(q)}{\F_n(q)}.
\end{equation}

\begin{lem}
\label{SecL}
Assume that $\left|\frac{\F_{n-2}(q)}{\F_n(q)}\right|_{C_0}<\frac{1}{R_0}$, 
then $\left|\frac{\F_{n}(q)}{\F_{n+2}(q)}\right|_{C_0}<\frac{1}{R_0}$.
\end{lem}

\begin{proof}
Using the recurrence~\eqref{FibFracRec}, one has
$$
\left|\frac{\F_{n+2}(q)}{\F_n(q)}\right|_{C_{R_0}}\geq
\left|\left[3\right]_q\right|_{C_0}-R_0^{2}\left|\frac{\F_{n-2}(q)}{\F_n(q)}\right|_{C_0}.
$$
Lemma \ref{MinLem} and the assumption imply that
$$
\left|\frac{\F_{n+2}(q)}{\F_n(q)}\right|_{C_0}>
\frac{1-R_0^{3}}{{1+R_0}}-R_0.
$$
Since due to the definition of $R_0$ we have
$$
\frac{1-R_0^{3}}{{1+R_0}}-R_0=R_0,
$$
this implies the claim.
\end{proof}

Observe that, for $n=2$ we have $\frac{\F_{0}(q)}{\F_2(q)}=\frac{1}{\left[2\right]_q}$,
and $\frac{\F_{1}(q)}{\F_3(q)}=\frac{1}{\left[3\right]_q}$.
Using, once again, Lemma~\ref{MinLem}, we have $\left|\frac{\F_{0}(q)}{\F_2(q)}\right|_{C_0}<\frac{1}{R_0}$
and $\left|\frac{\F_{1}(q)}{\F_3(q)}\right|_{C_0}<\frac{1}{R_0}$.
From Lemma~\ref{SecL} we conclude by induction that $\left|\frac{\F_{n-2}(q)}{\F_n(q)}\right|_{C_0}<\frac{1}{R_0}$ for all $n\geq2.$

In particular, we have that 
\begin{equation}
\label{3C0}
\left|[3]_q\right|_{C_0}\geq \frac{1-R_0^{3}}{{1+R_0}}>R_0>R_0^2\left|\frac{\F_{n-2}(q)}{\F_n(q)}\right|_{C_0}.
\end{equation}

%or equivalently
%$$
%1-R^{c_{n+1}}-R^{c_n-\frac{1}{2}}-R^{c_n-\frac{3}{2}}-R^\frac{3}{2}-R^\frac{1}{2}\geq0.
%$$
%Using the property $c_i\geq2$, the latter expression is bounded by
%$$
%1-R^{2}-2R^{\frac{3}{2}}-2R^{\frac{1}{2}},
%$$
%which is positive for $R=3-2\sqrt{2}$.
%The lemma follows.
%\end{proof}

Let us assume now that $\F_{n-2}(q)$ and $\F_{n}(q)$ have no zeros inside $D_0$. Since the cyclotomic polynomial $[3]_q=q^2+q+1$ has no roots inside~$D_0$, then from \eqref{FibFracRec} and (\ref{3C0}) by
 the Rouch\'e theorem, 
we conclude that~$\F_{n+2}(q)$ also has no roots  inside~$D_0$. 
The induction completes the proof of the statement that the polynomials $\F_{n}(q)$ have no zeros inside $D_0$.

%Therefore, we have
%$$
%R_*^2-R_*+1\leq
%\left|q^2+q+1\right|_{C_0}
%\leq
%R_*^2+R_*+1.
%$$
%Lemma~\ref{FirstL} then follows from the relation
%$R_*^2=3R_*-1$ satisfied by the radius $R_*=\frac{3-\sqrt{5}}{2}$.
%\qed
%
%
%Lemma~\ref{SecL} follows directly from~\eqref{FibFracRec} and Lemma~\ref{FirstL}.
%\qed

%\medskip

The proof of the
same statement for the polynomials~$\tilde\F_n(q)$ is similar, since~$\tilde\F_n(q)$ also satisfy~\eqref{FibPolRec}.
This follows from the recurrent formulas
$$
\begin{array}{rcl}
\tilde\F_{2\ell+1} &=& \tilde\F_{2\ell}+q^2\tilde\F_{2\ell-1},\\[4pt]
\tilde\F_{2\ell+2} &=& q\tilde\F_{2\ell+1}+\tilde\F_{2\ell},
\end{array}
$$
proved in~\cite{SVRat}.

Finally, since the polynomials~$\F_n(q)$ and~$\tilde\F_{n}(q)$ are mirror of each other,
this implies that these polynomials have no roots outside the disc with the radius $R_0^{-1}$.
Theorem~\ref{FiboT} is proved.
\end{proof}

The computer calculations shows that the roots of the Fibonacci polynomials
actually lie in the smaller annulus
$$
R_\star<\left|q\right|<R_\star^{-1},
$$
but we cannot prove this yet. {
This was claimed in the earlier preprint version of this work (and repeated in \cite{Ren}), but our proof was incorrect.

%%%%%%%%%%%%%%%%%%%%
%%%%%%%%%%%%%%%%%%%%
\section{$\left[\sqrt{2}\right]_q$ and roots of the Pell polynomials}\label{PellS}
%%%%%%%%%%%%%%%%%%%%
%%%%%%%%%%%%%%%%%%%%

The second example we consider is~$\sqrt{2}$, known also as silver ratio.
The slight modification,~$\sqrt{2}+1$, has some tiny advantages, so we will often work with it;
note that~$\sqrt{2}+1$ is approximated by the quotients of consecutive {\it Pell numbers}
$\frac{P_{n+1}}{P_n}=\underbrace{\left[2, 2, \ldots,2\right]}_n$.

We show that the convergence radius of the series~$\left[\sqrt{2}\right]_q$ and~$\left[\sqrt{2}+1\right]_q$ is equal to $R_{\sqrt{2}}:=R(\sqrt{2})$,
\begin{equation}
\label{Rad1}
R_{\sqrt{2}}=
\frac{1 + \sqrt{2} - \sqrt{2\sqrt{2} - 1}}{2}\quad\approx0.53101,
\end{equation}
and that the roots of the polynomials in the numerators and
denominators of~$\left[\frac{P_{n+1}}{P_n}\right]_q$
belong to the annulus bounded by the circles with radius~$R_1$ and~$R_1^{-1}$, with $R_1\approx0.43542$.

%%%%%%%%%%%%%%%%%%%%
\subsection{The series~$\left[\sqrt{2}\right]_q$ and~$\left[\sqrt{2}+1\right]_q$}
%%%%%%%%%%%%%%%%%%%%
These series are related by
$\left[\sqrt{2}+1\right]_q=q\left[\sqrt{2}\right]_q+1$ (cf.~\eqref{SLEq}) and obviously have the same convergence radius;
we prefer to perform the calculations for~$\left[\sqrt{2}+1\right]_q$.

The $q$-deformation~$\left[\sqrt{2}+1\right]_q$ is given by the infinite $2$-periodic continued fraction
\begin{equation}
\label{RRSEq}
\left[\sqrt{2}+1\right]_q=
1 +q+ \cfrac{q^{4}}{q+q^2
          + \cfrac{1}{1 +q
          +\cfrac{q^{4}}{q+q^2
          + \cfrac{1}{\ddots
       }}}} 
\end{equation}
see~\cite{SVRe}.
This is the $q$-deformed classical continued fraction expansion~$\sqrt{2}+1=\left[2,2,2,2,\ldots\right]$.

The series $\left[\sqrt{2}+1\right]_q$ satisfies the following functional equation:
\begin{equation}
\label{SREq}
qX^2-
\left(q^3+2q-1 \right)X -1 =0,
\end{equation}
readily obtained from~\eqref{RRSEq}, and can be calculated from it recursively:
$$
\begin{array}{rcl}
\left[\sqrt{2}+1\right]_q&=&
1+q+q^4-2q^6+q^7+4q^8-5q^9-7q^{10}+ 18q^{11}+ 7q^{12}-55q^{13}+ 18q^{14}\\[4pt]
&&+ 146q^{15}- 155q^{16} - 322q^{17}+692q^{18}+ 476q^{19}- 2446q^{20}+ 307q^{21}\\[4pt]
&&+ 7322q^{22}- 6276q^{23}- 18277q^{24}+ 33061q^{25}+ 33376q^{26}- 129238q^{27}- 10899q^{28}\cdots
\end{array}
$$
see Sequence A337589 of~\cite{OEIS} for the coefficients of this series.

\begin{prop}
\label{GRProp}
The radius of convergence of the series~$\left[\sqrt{2}\right]_q$ and~$\left[\sqrt{2}+1\right]_q$ is equal to~$R_{\sqrt{2}}$.
\end{prop}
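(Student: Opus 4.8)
The plan is to mirror exactly the argument used for the $q$-golden ratio in Proposition~\ref{GRProp} (the Fibonacci version). Since $\left[\sqrt{2}+1\right]_q$ satisfies the quadratic functional equation~\eqref{SREq}, namely $qX^2-(q^3+2q-1)X-1=0$, I would first solve this equation for $X$ by the quadratic formula, obtaining
\begin{equation*}
\left[\sqrt{2}+1\right]_q
=\frac{q^3+2q-1+\sqrt{(q^3+2q-1)^2+4q}}{2q}.
\end{equation*}
One must check that the correct branch of the square root is taken so that the right-hand side has a Taylor expansion at $q=0$ agreeing with the known series (the $+$ sign is forced by the requirement that the constant term be $1$, not $-\infty$; near $q=0$ the discriminant tends to $1$, and $(q^3+2q-1)+1\to 0$ like $2q$, so the quotient is regular). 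The radius of convergence of this Taylor expansion is then the modulus of the zero of the radicand $P(q):=(q^3+2q-1)^2+4q$ that is closest to the origin, since away from the zeros of $P$ the function is holomorphic (the apparent pole at $q=0$ is removable, as just noted).

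Next I would analyze the polynomial $P(q)=(q^3+2q-1)^2+4q$, a degree-$6$ polynomial. The claim is that its smallest root in modulus is exactly $R_{\sqrt{2}}=\tfrac{1+\sqrt{2}-\sqrt{2\sqrt{2}-1}}{2}$. I expect $P$ to factor, by analogy with the golden-ratio case where $q^2+3q+1$ split off a quadratic factor $(q+R_*)(q+R_*^{-1})$. Expanding, $P(q)=q^6+4q^4-2q^3+4q^2-4q+1$. I would look for a factorization of the form $P(q)=(q^2+aq+1)(q^4+bq^3+cq^2+dq+1)$ — the palindromic structure of $P$ (coefficients $1,0,4,-2,4,-4,1$ are *not* palindromic, so perhaps instead an anti-palindromic twist, or a factor $q^2-\alpha q+1$ paired with a quartic). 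The key arithmetic input is that $R_{\sqrt{2}}$ is a root of $q^2-(1+\sqrt{2})q+\text{something}$; indeed $R_{\sqrt{2}}$ and its Galois conjugate satisfy $q^2-(1+\sqrt{2})q+\tfrac{(1+\sqrt{2})^2-(2\sqrt{2}-1)}{4}=q^2-(1+\sqrt{2})q+1$, since $(1+\sqrt{2})^2-(2\sqrt{2}-1)=3+2\sqrt2-2\sqrt2+1=4$. So $R_{\sqrt 2}$ is a root of $q^2-(1+\sqrt2)q+1$, hence also of its conjugate $q^2-(1-\sqrt2)q+1$, hence of the product $(q^2-(1+\sqrt2)q+1)(q^2-(1-\sqrt2)q+1)=q^4-2q^3+ (2-2\cdot? )\dots$ over $\Q$. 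I would carry out this multiplication: it yields $q^4 - 2q^3 + ( -1 )q^2 - 2q + 1$ — or some such quartic $Q(q)$ — and then verify by direct polynomial division that $Q(q)\mid P(q)$, with the complementary factor being a quadratic whose roots have modulus strictly larger than $R_{\sqrt{2}}$.

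Once the factorization $P(q)=Q(q)\cdot(\text{quadratic})$ is established, I would finish by checking: (i) the four roots of $Q$ come in reciprocal pairs $\{R_{\sqrt2},R_{\sqrt2}^{-1}\}$ and $\{R',R'^{-1}\}$ where $R'$ is the second conjugate (a root of $q^2-(1-\sqrt2)q+1$), and among all of these $R_{\sqrt2}$ has the smallest modulus; (ii) the two remaining roots, coming from the complementary quadratic factor, have modulus $>R_{\sqrt2}$. Then the Taylor series of $\left[\sqrt{2}+1\right]_q$ converges exactly for $|q|<R_{\sqrt2}$, and since $\left[\sqrt{2}\right]_q=\tfrac{1}{q}\bigl(\left[\sqrt{2}+1\right]_q-1\bigr)$ differs from it only by operations that do not change the radius, both series have radius of convergence $R_{\sqrt2}$, as claimed.

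The main obstacle I anticipate is purely computational: correctly identifying the factorization of the degree-$6$ radicand $P(q)$ and confirming that the spurious quadratic factor contributes only roots of larger modulus. There is also a small subtlety in branch selection for the square root (ensuring the series obtained is the one coming from the stabilization of the Pell convergents, rather than the other solution of~\eqref{SREq}), but as in the golden-ratio case this is settled by matching constant terms. Everything else is a direct transcription of the proof of Proposition~\ref{GRProp}.
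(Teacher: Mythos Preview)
Your overall strategy---solve the quadratic~\eqref{SREq}, write the generating function with a square root, and read off the radius of convergence as the modulus of the smallest zero of the radicand---is exactly what the paper does. However, your execution contains a computational error and a conceptual misstep that together derail the argument.

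First, the arithmetic: you wrote $P(q)=(q^3+2q-1)^2+4q=q^6+4q^4-2q^3+4q^2-4q+1$, but the $-4q$ from the square and the $+4q$ cancel, so in fact $P(q)=q^6+4q^4-2q^3+4q^2+1$, which \emph{is} palindromic. It factors as
\[
P(q)=(q^4+q^3+4q^2+q+1)(q^2-q+1),
\]
and this is the factorization the paper uses.

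Second, and more importantly: you assume that $R_{\sqrt{2}}$ is itself a (real) root of the radicand, compute its minimal polynomial $Q(q)=q^4-2q^3+q^2-2q+1$ over~$\Q$, and then try to show $Q\mid P$. But $Q$ does \emph{not} divide $P$. The point is that the roots of $q^4+q^3+4q^2+q+1$ are all non-real; they come in two conjugate-reciprocal pairs whose \emph{moduli} are $R_{\sqrt{2}}$ and $R_{\sqrt{2}}^{-1}$. (Writing $q+q^{-1}=u$ reduces the quartic to $u^2+u+2=0$, which has no real solutions; one then checks that for a root $q$ one gets $|q|+|q|^{-1}=1+\sqrt{2}$, whence $|q|=R_{\sqrt{2}}$ or $R_{\sqrt{2}}^{-1}$.) So $R_{\sqrt{2}}$ is the modulus of the smallest root, not a root, and the minimal polynomial of $R_{\sqrt{2}}$ plays no role in factoring $P$. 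The remaining factor $q^2-q+1$ contributes roots on the unit circle, which are harmless. With these corrections your proof goes through and coincides with the paper's.
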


\begin{proof}
The generating function of the series can be deduced from~\eqref{SREq}:
$$
\mathrm{GF}_{\left[\sqrt{2}+1\right]_q}=
\frac{q^3+2q-1+\sqrt{(q^4+q^3+4q^2+q+1)(q^2-q+1)}}{2q}.
$$
The radius~\eqref{Rad1} is equal to the modulus of the root of the polynomial
$q^4+q^3+4q^2+q+1$ closest to zero.
\end{proof}

%%%%%%%%%%%%%%%%%%%%
\subsection{The Pell polynomials}
%%%%%%%%%%%%%%%%%%%%
The irrational~$\left[\sqrt{2}+1\right]_q$ can be approximated by the quotient of the consecutive Pell numbers:
$\frac{P_{n+1}}{P_n}=
\underbrace{\left[2, 2, \ldots,2\right]}_n$.
We define the {\it Pell polynomials} via
$$
\left[\frac{P_{n+1}}{P_{n}}\right]_q=:
\frac{\tilde\Pc_{n+1}(q)}{\Pc_n(q)}.
$$
The polynomials $\tilde\Pc_{n}(q)$ and $\Pc_n(q)$ are of degree~$2n-3$, and, similarly to the Fibonacci polynomials,
are the mirrors of each other: $q^{2n-3}\tilde\Pc_{n}(q^{-1})=\Pc_n(q)$.

\begin{prop}
\label{PeP}
The polynomials~$\Pc_n(q)$  are determined by the recurrence
\begin{equation}
\label{PePolRec}
\Pc_{n+2} = {4\choose 2}_q\Pc_{n} - q^4\Pc_{n-2},
\end{equation}
where ${4\choose 2}_q=1+q+2q^2+q^3+q^4$ is the Gaussian $q$-binomial, and the initial conditions
$$
\left(\Pc_0(q)=0,\;
\Pc_2(q)=1+q\right)
\qquad\hbox{and}\qquad
\left(\Pc_{-1}(q)=\Pc_1(q)=1\right).
$$
\end{prop}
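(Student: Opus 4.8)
The plan is to imitate the proof of Proposition~\ref{FiboP}: first record the elementary recurrences relating consecutive $q$-deformed convergents of $\sqrt{2}+1=[2,2,2,\ldots]$, and then eliminate the mirror polynomials~$\tilde\Pc_n$ to obtain~\eqref{PePolRec}. The one structural novelty relative to the Fibonacci case is that, while $[1]_q=1$ is invariant under $q\mapsto q^{-1}$, the partial quotient $[2]_q=1+q$ is not; consequently the one-step recurrences necessarily couple $\Pc_n$ with~$\tilde\Pc_n$, and the three-term recurrence for $\Pc_n$ alone only emerges after~$\tilde\Pc_n$ is eliminated.

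The first step is to establish the coupled recurrences
\begin{equation*}
\tilde\Pc_{n+2}=q^{4}\tilde\Pc_{n}+(1+q)\,\Pc_{n+1},
\qquad
\Pc_{n+2}=\Pc_{n}+(q+q^{2})\,\tilde\Pc_{n+1}.
\end{equation*}
The first follows from $P_{n+2}/P_{n+1}=2+P_{n}/P_{n+1}$: applying $[x+1]_q=q[x]_q+1$ from~\eqref{SLEq} twice yields $\left[P_{n+2}/P_{n+1}\right]_q=q^{2}\left[P_{n}/P_{n+1}\right]_q+q+1$; and by the inversion formula $[1/x]_q=1/[x]_{q^{-1}}$ from~\eqref{qInvEq} together with the mirror relation $q^{2m-3}\tilde\Pc_m(q^{-1})=\Pc_m(q)$ one gets $\left[P_{n}/P_{n+1}\right]_q=1/\left[P_{n+1}/P_n\right]_{q^{-1}}=q^{2}\,\tilde\Pc_n(q)/\Pc_{n+1}(q)$, so that comparing denominators in the identity $\left[P_{n+2}/P_{n+1}\right]_q=\tilde\Pc_{n+2}/\Pc_{n+1}$ gives the first recurrence. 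The second recurrence is obtained from the first by the substitution $q\mapsto q^{-1}$ followed by multiplication by $q^{2n+1}$, using the mirror relation once more.

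The second step is the elimination. Rearranging the second recurrence gives $\tilde\Pc_{m}=(\Pc_{m+1}-\Pc_{m-1})/(q+q^{2})$; substituting this for $m=n$ and $m=n+2$ into the first recurrence and clearing the denominator $q(1+q)$ yields
\begin{equation*}
\Pc_{n+3}=\bigl(1+q^{4}+q(1+q)^{2}\bigr)\Pc_{n+1}-q^{4}\Pc_{n-1},
\end{equation*}
which is~\eqref{PePolRec} because $1+q^{4}+q(1+q)^{2}=1+q+2q^{2}+q^{3}+q^{4}={4\choose 2}_q$. It then remains only to verify the initial data directly from Definition~\ref{RecDefQq}: $[2]_q=1+q$, $[2,2]_q=(1+2q+q^{2}+q^{3})/(1+q)$ and $[2,2,2]_q$ give $\Pc_1=1$, $\Pc_2=1+q$, $\Pc_3=1+q+2q^{2}+q^{3}$, while $\Pc_0=0$ is the natural normalization (consistent with $\F_0=0$ in the Fibonacci case).

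The step demanding the most care is the first one, i.e.\ the bookkeeping of the substitution $q\mapsto q^{-1}$ and its interplay with the mirror relation, together with a separate check of the finitely many small values of~$n$; I do not, however, expect a genuine obstacle here, since the whole argument runs parallel to the Fibonacci case. As an alternative one could read off from~\eqref{qa} the two one-step transfer matrices attached to a partial quotient equal to~$2$, multiply them into the transfer matrix of the period ``$2,2$'', and apply the Cayley--Hamilton theorem: the trace of that matrix is ${4\choose 2}_q$, which recovers~\eqref{PePolRec} on the even and odd subsequences separately after the appropriate monomial normalization.
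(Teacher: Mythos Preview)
Your argument is correct and follows the same overall strategy as the paper: establish a pair of one-step recurrences and then eliminate to obtain the two-step recurrence~\eqref{PePolRec}. The difference lies only in \emph{which} one-step recurrences are used. The paper simply quotes from~\cite{SVRat} the parity-dependent pair
\[
\Pc_{2\ell+1}=(q+q^{2})\,\Pc_{2\ell}+\Pc_{2\ell-1},
\qquad
\Pc_{2\ell+2}=(1+q)\,\Pc_{2\ell+1}+q^{4}\,\Pc_{2\ell},
\]
which involve only the denominators~$\Pc_n$, and then eliminates the term of opposite parity. You instead derive from scratch, via~\eqref{SLEq} and~\eqref{qInvEq}, a parity-free pair coupling $\Pc_n$ with~$\tilde\Pc_n$, and eliminate~$\tilde\Pc_n$. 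Your route is self-contained (no appeal to~\cite{SVRat}) and handles even and odd~$n$ simultaneously; the paper's is shorter because it cites the one-step formulas as known.

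Two small slips worth fixing: in your first step you should say ``comparing \emph{numerators}'' rather than ``denominators'' when reading off $\tilde\Pc_{n+2}=q^{4}\tilde\Pc_{n}+(1+q)\Pc_{n+1}$; and your remark that the one-step recurrences ``necessarily couple $\Pc_n$ with~$\tilde\Pc_n$'' is too strong, since the paper's pair above involves only~$\Pc_n$.
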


\begin{proof}
Recurrence~\eqref{PePolRec} follows from the formulas
$$
\begin{array}{rcl}
\Pc_{2\ell+1} &=& \left(q+q^2\right)\Pc_{2\ell}+\Pc_{2\ell-1},\\[4pt]
\Pc_{2\ell+2} &=& \left(1+q\right)\Pc_{2\ell+1}+q^4\,\Pc_{2\ell},
\end{array}
$$
proved in~\cite{SVRat}.
\end{proof}

The coefficients of~$\Pc_{n}(q)$ with~$n\geq1$ form a triangular sequence
$$
\begin{array}{rccccccccccc}
1\\
1&1\\
1&1&2&1\\
1&2&3&3&2&1\\
1&2&5&6&6&5&3&1\\
1&3&7&11&13&13&11&7&3&1\\
1 &  3  &9&  16&  24&  29&  29&  25&  18&   10&   4&   1\\
\cdots
\end{array}
$$
(see Sequence A323670 of~\cite{OEIS}).

\begin{ex}
\label{FibpPEx}
One has
$$
\begin{array}{rcl}
\left[\frac{5}{2}\right]_q&=&\displaystyle\frac{1+2q+q^2+q^3}{1+q+q^2},
\\[12pt]
\left[\frac{12}{5}\right]_q&=&\displaystyle\frac{1+2q+3q^2+3q^3+2q^4+q^5}{1+q+2q^2+q^3},
\\[12pt]
\left[\frac{29}{12}\right]_q&=&\displaystyle\frac{1+3q+5q^2+6q^3+6q^4+5q^5+2q^6+q^7}{1+2q+3q^2+3q^3+2q^4+q^5},
\\[12pt]
\left[\frac{70}{29}\right]_q&=&\displaystyle\frac{1+3q+7q^2+11q^3+13q^4+13q^5+11q^6+7q^7+3q^8+q^9}{1+2q+5q^2+6q^3+6q^4+5q^5+3q^6+q^7},
\\
\ldots&\ldots&\ldots
\end{array}
$$
\end{ex}

%%%%%%%%%%%%%%%%%%%%
\subsection{Roots of the Pell polynomials}
%%%%%%%%%%%%%%%%%%%%

Let $R_1=0.43542...$ be the smallest positive root of the equation
\begin{equation}
\label{R1}
R^4-2R^3-2R+1=0.
\end{equation}
Let~$D_1$ be the disc and $C_1$ the circle with radius~$R_1$:
$$
D_1=\left\{q\in\C\;,|q|\leq R_1\right\},
\qquad\qquad
C_1=\left\{q\in\C\;,|q|=R_1\right\}.
$$

\begin{thm}
\label{PeT}
The roots of the Pell polynomials ~$\Pc_n(q)$ and~$\tilde\Pc_{n}(q)$ belong to the annulus 
$$
R_1<\left|q\right|<R_1^{-1}.
$$
\end{thm}

\begin{proof}
The proof is similar to the Fibonacci case.
Recurrence~\eqref{PePolRec} leads to
\begin{equation}
\label{PeFracRec}
\frac{\Pc_{n+2}(q)}{\Pc_n(q)}= {4\choose 2}_q-q^4\,\frac{\Pc_{n-2}(q)}{\Pc_n(q)},
\end{equation}
where ${4\choose 2}_q=(q^2+1)(q^2+q+1).$
On the circle $C_R$ of radius $R$ we have $|1+q^2|\geq 1-R^2$
 and $|q^2+q+1|\geq \frac{1-R^3}{1+R}$ as before.
 Thus on $C_R$ we have  
 \begin{equation}
\label{EstLemPell}
\left|{4\choose 2}_q\right|\geq\frac{(1-R^{3})(1-R^2)}{1+R}=(1-R^3)(1-R).
\end{equation}

%  Once again, we use the Rouch\'e theorem and want to prove that, for every~$n$,
%the absolute value of the polynomial ${4\choose 2}_q$ satisfies
%\begin{equation}
%\label{Rouch}
%\left|{4\choose 2}_q\right|_{C_1}
%\;>\;
%R_{\sqrt{2}}^4\left|\frac{\Pc_{n-2}(q)}{\Pc_n(q)}\right|_{C_1},
%\end{equation}
%on the circle $C_1=\left\{|q|=R_{\sqrt{2}}\right\}$.
%Indeed, all the roots of the polynomial ${4\choose 2}_q=(q^2+1)(q^2+q+1)$ belong to the unit circle,
%so that~\eqref{Rouch} will guarantee, by induction on~$n$, that~$\Pc_{n+2}(q)$ has no roots  inside~$D_1$.
%
%The inequality~\eqref{Rouch} follows from the next two lemmas.
%
%\begin{lem}
%\label{PFirstL}
%On the circle $C_1$, one has
%$$
%R_{\sqrt{2}}\left(R_{\sqrt{2}}^2+R_{\sqrt{2}}+1\right)\leq
%\left|{4\choose 2}_q\right|_{C_1}.
%$$
%\end{lem}
%
%\noindent
%{\it Proof of Lemma~\ref{PFirstL}}.
%The minimum of the function 
%$$
%\left|{4\choose 2}_q\right|=\left|(q^2+1)(q^2+q+1)\right|
%$$
%on the circle~$C_1$ is attained at the point~$q=-R_{\sqrt{2}}$.
%Therefore, we have
%$$
%(R_{\sqrt{2}}^2+1)(R_{\sqrt{2}}^2-R_{\sqrt{2}}+1)=R_{\sqrt{2}}^4-R_{\sqrt{2}}^3+2R_{\sqrt{2}}^2-R_{\sqrt{2}}+1\leq
%\left|{4\choose 2}_q\right|_{C_1}.
%$$
%Since the radius~$R_{\sqrt{2}}$ satisfies the equation
%$
%R_{\sqrt{2}}^4-2R_{\sqrt{2}}^3+R_{\sqrt{2}}^2-2R_{\sqrt{2}}+1=0,
%$
%the above inequality becomes
%$$
%R_{\sqrt{2}}^3+R_{\sqrt{2}}^2+R_{\sqrt{2}}\leq
%\left|{4\choose 2}_q\right|_{C_1}.
%$$
%Hence the lemma.
%\qed

\begin{lem}
\label{PSecL}
Assume that $\left|\frac{\Pc_{n-2}(q)}{\Pc_n(q)}\right|_{C_1}<\frac{1}{R_1}$, 
then $\left|\frac{\Pc_{n}(q)}{\Pc_{n+2}(q)}\right|_{C_1}<\frac{1}{R_1}$.
\end{lem}

{\it Proof of Lemma~\ref{PSecL}}.
It follows from~\eqref{PeFracRec} and (\ref{EstLemPell}) that
$$
\left|\frac{\Pc_{n+2}(q)}{\Pc_{n}(q)}\right|_{C_1}
\geq\left|{4\choose 2}_q\right|_{C_1}-R_1^4\left|\frac{\Pc_{n-2}(q)}{\Pc_n(q)}\right|_{C_1}>(1-R_1^3)(1-R_1)-R_1^3=R_1,
$$
since $(1-R_1^3)(1-R_1)-R_1^3-R_1=R_1^4-2R_1^3-2R_1+1=0$ by choice of $R_1.$
\qed

From the lemma by induction we can conclude that $\left|\frac{\Pc_{n-2}(q)}{\Pc_n(q)}\right|_{C_0}<\frac{1}{R_1}$
and thus
\begin{equation}
\label{3C1}
\left|{4\choose 2}_q\right|_{C_1}> (1-R_1^3)(1-R_1)=R_1^3+R_1>R_1^4\left|\frac{\Pc_{n-2}(q)}{\Pc_n(q)}\right|_{C_1}.
\end{equation}

Assume that $\Pc_{n-2}(q)$ and $\Pc_{n}(q)$ have no zeros inside $C_1$. Since  ${4\choose 2}_q=(q^2+1)(q^2+q+1)$ has no roots inside~$D_1$, then due to (\ref{3C1}) we can apply Rouch\'e theorem  
to conclude that~$\Pc_{n+2}(q)$ also has no roots  inside~$D_1$. The induction completes the proof
of this statement for all $\Pc_{n}(q).$

The proof that the polynomials~$\tilde\Pc_n(q)$ have no
roots inside~$D_1$ is analogous since~$\tilde\Pc_n(q)$ also satisfy~\eqref{PePolRec}.
Theorem~\ref{PeT} then follows from the fact that~$\Pc_n(q)$ and~$\tilde\Pc_n(q)$ are mirrors of each other.
\end{proof}

Computer experiments show that for $n\leq45$,
the roots of the polynomials~$\Pc_n(q)$ and~$\tilde\Pc_{n}(q)$ belong to the smaller annulus 
$R_{\sqrt{2}}<\left|q_r\right|<R_{\sqrt{2}}^{-1},$
see Figure 1. 

\begin{figure}[h]
\label{TheFig}
\includegraphics[scale=0.35]{Root.pdf}
\caption{Roots of $\Pc_{10}(q)$: the smallest root modulus is $0.5668\ldots$, and the largest $1.8832\ldots$}
\end{figure}

 \begin{rem}
In the earlier preprint version of this work we claimed that this is true for all $n$, however the proof was incorrect.
Moreover, according to our computer calculations, the minimal roots of~$\Pc_{47}(q)$ 
have absolute values $0.52883...$, which is smaller than~$R_{\sqrt{2}}$,
while the maximal absolute value is $1.88796...$, which is greater than~$R_{\sqrt{2}}^{-1}$.
We are not sure whether these computer results are reliable, so this has to be studied further.
We also mention that Ren \cite{Ren} has proved similar claim for the convergents of metallic numbers $[m,m,m, \dots]_q$ with $m=3,4.$
\end{rem}

%%%%%%%%%%%%%%%%%%%%
%%%%%%%%%%%%%%%%%%%%
\section{Proof of Theorems~\ref{NewThm}
and~\ref{NewThm2}}\label{GeS}
%%%%%%%%%%%%%%%%%%%%
%%%%%%%%%%%%%%%%%%%%

%%%%%%%%%%%%%%%%%%%%
\subsection{Proof of Theorem~\ref{NewThm}}
%%%%%%%%%%%%%%%%%%%%

Let~$x$ be a rational, choose the rational approximations of~$x$ 
by the convergents of the Hirzebruch-Jung continued fraction:
$\frac{r_n}{s_n}=\llbracket{}c_1,c_2,c_3,\ldots,c_n\rrbracket{}$, see Section~\ref{qCFSec},
and consider the $q$-deformation
\begin{equation}
\label{RS}
\frac{\Rc_n(q)}{\Sc_n(q)}:=
\left[\frac{r_n}{s_n}\right]_q.
\end{equation}
Recall that the coefficients of the Hirzebruch-Jung continued fraction are at least~$2$.

Let $R_2=3-2\sqrt{2}=(\sqrt{2}-1)^2$ and 
 $C_2$ and $D_2$ be the circle and the disc with radius~$R_2$, respectively.
To prove that the radius $R(x)$ of convergence of $[x]_q$ is larger than $R_2$, it would be enough to prove that for all~$n$ the polynomial~$\Sc_n(q)$ in the denominator of~\eqref{RS}
has no roots inside $D_2$.

We use the same strategy as before based on the recurrence relations and Rouch\'e theorem.
Formula~\eqref{qc} implies that the polynomial~$\Sc_n(q)$ satisfy the recurrence
\begin{equation}
\label{qc1}
\Sc_{n+1}(q)=\left[c_{n+1}\right]_q\,\Sc_n(q)-q^{c_n-1}\,\Sc_{n-1}(q),
\end{equation}
with the initial values~$\Sc_{0}(q)=0$ and~$\Sc_{1}(q)=1$, that we rewrite as follows
\begin{equation}
\label{GREq}
\frac{\Sc_{n+1}(q)}{\Sc_n(q)}=\left[c_{n+1}\right]_q\,-\,q^{c_n-1}\,\frac{\Sc_{n-1}(q)}{\Sc_n(q)}.
\end{equation}

By the Rouch\'e theorem,
it suffices to prove that, for every~$n$, 
the polynomial $\left[c_{n+1}\right]_q=\frac{\;1-q^{c_{n+1}}}{{1-q}}$ 
dominates the second summand of the right-hand-side of~\eqref{GREq},
when restricted on the circle~$C_{2}$:
\begin{equation}
\label{GRMBisEq}
\left|{\left[c_{n+1}\right]_q}\right|_{C_{2}}
\;>\;
{R_2}^{\,c_n-1}\left|\frac{\Sc_{n-1}(q)}{\Sc_n(q)}\right|_{C_{2}}.
\end{equation}
Since for every positive integer~$c$ the polynomial~$\left[c\right]_q$ has no roots in~$D_{2}$, we will then argue by induction that
$\Sc_{n+1}(q)$ also has no roots in~$D_{2}$.

%\begin{lem}
%\label{MinLem}
%For every positive integer~$c$ and , one  has
%\begin{equation}
%\label{EstLem}
%\left|{\left[c\right]_q}\right|\geq\frac{1-R^{c}}{{1+R}},%\leq\frac{1+R^{c}}{{1-R}},
%\end{equation}
%on any circle of radius~$R$.
%\end{lem}
%
%\begin{proof}
%The modulus of the polynomial
%$
%\left[c_{n+1}\right]_q=
%\frac{1-q^c}{1-q}
%$
%restricted to $C_R$ is of the form
%$$
%\left|\left[c_{n+1}\right]_q\right|_{C_R}^2=
%\frac{(1-R^ce^{ci\varphi})(1-R^ce^{-ci\varphi})}{(1-Re^{i\varphi})(1-Re^{-i\varphi})}=
%\frac{1-2R\cos(2c\varphi)+R^{2c}}{1-2R\cos(\varphi)+R^2},
%$$
%where $\varphi$ is the standard parameter on the circle.
%Observe that the numerator of this fraction is greater 
%than the numerators of the square of the left-hand-side of~\eqref{EstLem},
%while the denominator is smaller.
%The lemma follows.
%\end{proof}

To prove~\eqref{GRMBisEq}, we will need the following inductive step.

\begin{lem}
\label{LastLem}
If $\left|\frac{\Sc_{n-1}(q)}{\Sc_n(q)}\right|_{C_{2}}<\frac{1}{{R_2}^{\frac{1}{2}}}$, 
then $\left|\frac{\Sc_{n}(q)}{\Sc_{n+1}(q)}\right|_{C_{2}}<\frac{1}{{R_2}^{\frac{1}{2}}}$.
\end{lem}

\begin{proof}
Using the recurrence~\eqref{GREq}, one has
$$
\left|\frac{\Sc_{n+1}(q)}{\Sc_n(q)}\right|_{C_{2}}\geq
\left|\left[c_{n+1}\right]_q\right|_{C_{2}}-{R_2}^{c_n-1}\left|\frac{\Sc_{n-1}(q)}{\Sc_n(q)}\right|_{C_{2}}.
$$
 Using Lemma \ref{MinLem} and inductive assumption we have
$$
\left|\frac{\Sc_{n+1}(q)}{\Sc_n(q)}\right|_{C_{2}}>
\frac{1-{R_2}^{c_{n+1}}}{{1+{R_2}}}-{R_2}^{c_n-\frac{3}{2}}.
$$
We need to show that
$$
\frac{1-{R_2}^{c_{n+1}}}{{1+{R_2}}}-{R_2}^{c_n-\frac{3}{2}}\geq {R_2}^\frac{1}{2},
$$
or equivalently
$$
1-{R_2}^{c_{n+1}}-{R_2}^{c_n-\frac{1}{2}}-{R_2}^{c_n-\frac{3}{2}}-{R_2}^\frac{3}{2}-{R_2}^\frac{1}{2}\geq 0.
$$
Since $c_i\geq2$ it is enough to check that for $R=R_2$
$$
1-R^{2}-2R^{\frac{3}{2}}-2R^{\frac{1}{2}}\geq 0.
$$
But $1-R^{2}-2R^{\frac{3}{2}}-2R^{\frac{1}{2}}=(1+R)(1-R-R^{\frac{1}{2}})=0$ when
$R^\frac{1}{2}=\sqrt{2}-1=R_2^\frac{1}{2}$.

\end{proof}

Lemmas~\ref{MinLem} and~\ref{LastLem} imply that the inequality~\eqref{GRMBisEq} is guaranteed if
$$
\frac{1-{R_2}^{c_{n+1}}}{{1+{R_2}}} \geq {R_2}^{c_n-\frac{3}{2}},
$$
which is equivalent to
$$
1-{R_2}^{c_{n+1}}-{R_2}^{c_n-\frac{1}{2}}-{R_2}^{c_n-\frac{3}{2}}\geq0.
$$
Using once again $c_i\geq2$ it is enough to check that
$
1-R_2^{2}-R_2^{\frac{3}{2}}-R_2^{\frac{1}{2}}\geq0,
$
which is obvious since $1-R_2^{2}-R_2^{\frac{3}{2}}-R_2^{\frac{1}{2}}=R_2^{\frac{3}{2}}+R_2^{\frac{1}{2}}.$
This completes the proof of Theorem~\ref{NewThm}.

%%%%%%%%%%%%%%%%%%%%
\subsection{A special class of continued fractions}\label{StatGTSec}
%%%%%%%%%%%%%%%%%%%%

%We will now describe the class of continued fractions for which we are able to prove Conjecture~\ref{MainConj}.

Now we prove Theorem \ref{NewThm2}, which claims that if the coefficients of the Hirzebruch-Jung 
continued fraction expansion a rational number $x=\llbracket{}c_1,c_2,c_3,\ldots,c_N\rrbracket{}$
satisfies
\begin{equation}
\label{InEq}
c_i\geq4,
\end{equation}
then the radius of convergence of~$\left[x\right]_q$ is greater than ~$R_\star=\frac{3-\sqrt{5}}{2}.$

Let us reformulate the inequality~\eqref{InEq} in terms of the coefficients of the
regular continued fraction expansion~$x=[a_1,a_2,a_3\ldots]$.
Recall the formula~\cite{Hir} expressing the coefficients of the Hirzebruch-Jung continued fraction:
$$
x=\llbracket{}
a_1+1,\underbrace{2,\ldots,2}_{a_2-1},\,
a_3+2,\underbrace{2,\ldots,2}_{a_4-1},\,a_5+2,\ldots,
a_{2n-1}+2,\underbrace{2,\ldots,2}_{a_{2n}-1},
\ldots\rrbracket
$$
In other words, the coefficients with odd indices,~$a_{2m-1}$ become $a_{2m-1}+2$
(except for~$a_1$ that produces $a_1+1$ in the Hirzebruch-Jung 
continued fraction), and coefficients with even indices,~$a_{2m}$ produce an $(a_{2m}-1)$-tuple of~$2$'s.
Inequality~\eqref{InEq} becomes:
$$
\left\{
\begin{array}{rcl}
a_{2m-1} & \geq & 2,\\[4pt]
a_{2m} & = & 1,
\end{array}
\right.
$$
starting from some~$N$.

%%%%%%%%%%%%%%%%%%%%
\subsection{Proof of Theorem~\ref{NewThm2} }
%%%%%%%%%%%%%%%%%%%%

The proof goes along the same lines as that of Theorem~\ref{NewThm}.
Let $C_\star$ be the circle with radius~$R_\star$.
It suffices to prove that for every~$n$ the polynomial~$\Sc_n(q)$ in~\eqref{RS} has no roots inside ~$C_\star$.

We need to prove that for every~$n$ 
the polynomial $\left[c_{n+1}\right]_q=\frac{1-q^{c_{n+1}}}{{1-q}}$ 
dominates the second summand of the right-hand-side of the recursion~\eqref{GREq},
when restricted on the circle~$C_\star$:
\begin{equation}
\label{GRMEq}
\left|{\left[c_{n+1}\right]_q}\right|_{C_\star}
\;>\;
R_\star^{\,c_n-1}\left|\frac{\Sc_{n-1}(q)}{\Sc_n(q)}\right|_{C_\star}.
\end{equation}

We argue by induction and will need the following.

\begin{lem}
\label{LastLem2}
If~$\left|\frac{\Sc_{n-1}(q)}{\Sc_n(q)}\right|_{C_\star}<\frac{1}{R_\star}$, then 
$\left|\frac{\Sc_{n}(q)}{\Sc_{n+1}(q)}\right|_{C_\star}<\frac{1}{R_\star}$.
\end{lem}

\begin{proof}
Using \eqref{GREq} and lemma \ref{MinLem} we have
$$
\left|\frac{\Sc_{n+1}(q)}{\Sc_{n}(q)}\right|_{C_\star}>\frac{1-R_\star^{c_{n+1}}}{1+R_\star}\,-\,R_\star^{c_n-2}, 
$$
which we claim to be larger than $R_\star.$
Indeed,
$$
\frac{1-R_\star^{c_{n+1}}}{1+R_\star}\,-\,R_\star^{c_n-2}
-R_\star=
\frac{1-R_\star^{c_{n+1}}-R_\star^{c_n-2}-R_\star^{c_n-1}-R_\star-R_\star^2}{1+R_\star}.
$$
Since~$c_i\geq4$ for all~$i$, we have
$$
\frac{1-R_\star^{c_{n+1}}-R_\star^{c_n-2}-R_\star^{c_n-1}-R_\star-R_\star^2}{1+R_\star}\geq
\frac{1-R_\star^{4}-R_\star^{3}-2R_\star^{2}-R_\star}{1+R_\star}=\frac{14-36R_\star}{1+R_\star},
$$
because~$R_\star^2=3R_\star-1$,
and so $R_\star^3=8R_\star-3$, and $R_\star^4=21R_\star-8$.
One can check that~$14-36R_\star>0$,
which implies the lemma.
\end{proof}

From this lemma by induction we have that for all $n$
$$
R_\star^{c_n-1}\left|\frac{\Sc_{n-1}(q)}{\Sc_n(q)}\right|_{C_\star}<{}R_\star^{c_n-2}.
$$
We claim that 
$$
R_\star^{c_n-2}<\frac{1-R_\star^{c_{n+1}}}{{1+R_\star}}.
$$
Indeed, under our assumptions
$$1-R_\star^{c_{n+1}}-R_\star^{c_n-1}-R_\star^{c_n-2}>
1-R_\star^{4}-R_\star^{3}-R_\star^{2},$$ which is positive. This means that
$$
\left|{\left[c_{n+1}\right]_q}\right|_{C_\star}
\;\geq\;\frac{1-R_\star^{c_{n+1}}}{{1+R_\star}}\;>\;R_\star^{c_n-2}>R_\star^{\,c_n-1}\left|\frac{\Sc_{n-1}(q)}{\Sc_n(q)}\right|_{C_\star},
$$
and we can apply Rouch\'e theorem to complete the proof.
\qed

\begin{rem}
Note that using the adopted approach
we cannot improve the assumption~\eqref{InEq}.
Indeed, assuming~$c_i\geq3$ in Lemma~\ref{LastLem2},
leads in the proof to the quantity $3-8R_\star$, which is negative.
\end{rem}

%%%%%%%%%%%%%%%%%%%%
\section{Miscellaneous experiments}\label{BrolS}
%%%%%%%%%%%%%%%%%%%%

Multiple computer experiments show that the situation in the $q$-deformed case
is very different from classical Markov theory \cite{Aigner}.

\begin{ex}
For~$\sqrt{3}$, one has: $\sqrt{3}=[1,\overline{1,2}]=\llbracket{}2,\overline{4}\rrbracket{}$.
The generating function of the series~$\left[\sqrt{3}\right]_q$ is
$$
\mathrm{GF}_{\left[\sqrt{3}\right]_q}
=
\frac{q^3+q^2-q-1+\sqrt{q^6 + 2q^5 + 3q^4 + 3q^2 + 2q + 1}}{2q^2},
$$
see~\cite{SVRe}.
The absolute value of the minimal root of the polynomial under the radical is $$R_{\sqrt{3}}:=R(\sqrt{3})\approx0.527756\ldots$$
which is between~$R_\star$ and~$R_{\sqrt{2}}$:
$$
R_\star<R_{\sqrt{3}}<R_{\sqrt{2}}.
$$
This example demonstrates, that, unlike classical Markov theory \cite{Aigner}, the series corresponding to~$\sqrt{2}$
``converges better'' than that of~$\sqrt{3}$.
\end{ex}

Let us also give another interesting example, which is the third ``badly approximated'' number
in Markov theory, after~$\varphi$ and  the "silver ratio" $\sqrt{2}$.

\begin{ex}
The number
$\alpha=\frac{9+\sqrt{221}}{10}=[\overline{2,2,1,1}]$, sometimes called  ``bronze ratio'', is the third most irrational number \cite{Aigner}.
In this case, the radius of convergence can be calculated explicitly in radicals.
More precisely, the radius of convergence of~$\left[\frac{9+\sqrt{221}}{10}\right]_q$ is
$$
R_\mathrm{bronze}:=R(\alpha)=
\frac{1 + \sqrt{13} - \sqrt{2\left(\sqrt{13} - 1\right)}}{4}
\quad\approx0.58069\ldots
$$
Indeed, a direct computation gives
$$
\textstyle
\left[\frac{9+\sqrt{221}}{10}\right]_q=
\frac{q^6+2q^5+3q^4+3q^3+q^2-1+\sqrt{(q^4+3q^3+5q^2+3q+1)(q^6+2q^5+3q^4+5q^3+3q^2+2q+1)(q^2-q+1)}}{2q(q^3+2q^2+q+1)}
$$
Note that~$221=13\cdot17$, the factors under the radical are $q$-versions of these numbers.
Quite remarkably, the polynomial under the radical is a palindrome polynomial
(for a general result; see~\cite{LMG}).
The radius of convergence is equal to the absolute value of the minimal root of the polynomial under the radical,
which can be found explicitly.
\end{ex}

We wonder if for other quadratic irrationals the radius of convergence 
is an algebraic number of degree~$2^n$ (like in the theory of ruler-and-compass construction)
 as it was for~$\varphi,\sqrt{2}$, and for the above example,
but we have no explicit formulas in general.

\bigbreak \noindent
{\bf Acknowledgements}.
We are very grateful to the Mathematisches Forschungsinstitut Oberwolfach
 for the hospitality during our RiP stay in summer 2020, when this project was started.
We would like to thank also Jenya Ferapontov and Sergei Tabachnikov for fruitful discussions,
and the anonymous referee for pointing out several inaccuracies in the first version of the paper.
The work of  VO  was partially supported by the ANR project ANR-19-CE40-0021.

\end{document}